\newtheorem{prop}{PROPOSITION}[section]
\newtheorem{lemma}{LEMMA}[section]
\newtheorem{theorem}{THEOREM}[section]
\theoremstyle{assumptions}
\newtheorem{assmpn}{Assumption}[section]
\DeclareMathOperator{\Var}{Var}
\DeclareMathOperator{\Cov}{Cov}
\begin{document}

\numberwithin{equation}{section}
\title{Current fluctuations for  independent random walks in multiple dimensions}
 
\author{Rohini Kumar\\
 Statistics and Applied Probability\\
 UCSB\\
 Santa Barbara, CA-93106\\
 {\tt kumar@pstat.ucsb.edu}}
 
\maketitle

 \begin{abstract}
Consider a system of particles evolving as independent and identically distributed (i.i.d.)\ random walks. 
Initial fluctuations in the particle density get translated over time with velocity $\vec{v}$, the common mean velocity of the random walks. 
Consider a box centered around an observer who starts at the origin and moves with constant velocity $\vec{v}$.  To observe interesting fluctuations beyond the translation of initial density fluctuations,  we measure  the net flux of particles over time into this moving box.  We call this the ``box-current" process. 
 
 We generalize this current process to a distribution valued process. Scaling time by $n$ and space by $\sqrt{n}$ gives current fluctuations of order $n^{d/4}$ where $d$ is the space dimension. The scaling limit of the normalized current process is a distribution valued Gaussian process with given covariance. The limiting current process is equal in distribution to the solution of a given stochastic partial differential equation which is related to the generalized Ornstein-Uhlenbeck process. 
  \end{abstract}

\bigskip
 
{\bf Key words.} Independent random walks, hydrodynamic limit, current fluctuations, distribution valued process, generalized Ornstein-Uhlenbeck process.

{\bf AMS subject classifications.} Primary 60K35, 60F10; secondary 60F17, 60G15.

\allowdisplaybreaks[1]

\def\v{\vec{v}}

\def\sk{\sum_{k=0}^{[Tn^\beta]}}
\def\Zd{\mathbb Z^d}
\def\Xt{\tfrac{X_{m,j}(nt)-[n\vec{v}t]}{\sqrt{n}}} 
\def\Xtj{\tfrac{X_{m,1}(nt)-[n\vec{v}t]}{\sqrt{n}}} 
\def\Xs{\tfrac{X_{m,j}(ns)-[n\vec{v}s]}{\sqrt{n}}} 
\def\Xsj{\tfrac{X_{m,1}(ns)-[n\vec{v}s]}{\sqrt{n}}} 
\def\sm{\sum_{m\in\Zd}}
\def\seta{\sum_{j=1}^{\eta_0(m)}}
\def\pst{\phi(\Xt)-\phi(\Xs)}
\def\pstj{\phi(\Xtj)-\phi(\Xsj)}
\def\sB{{\bf 1}\{X_{m,j}(ns)\in B_{M\sqrt{n}}+[n\vec{v}s]\}}
\def\tB{{\bf 1}\{X_{m,j}(nt)\in B_{M\sqrt{n}}+[n\vec{v}t]\}}
\def\sBm{{\bf 1}\{X(ns)\in B_{M\sqrt{n}}+[n\vec{v}s]-m\}}
\def\tBm{{\bf 1}\{X(nt)\in B_{M\sqrt{n}}+[n\vec{v}t]-m\}}
\def\Xtm{\tfrac{X(nt)-[n\vec{v}t]+m}{\sqrt{n}}}
\def\Xsm{\tfrac{X(ns)-[n\vec{v}s]+m}{\sqrt{n}}}
\def\pstm{\phi(\Xtm)-\phi(\Xsm)}
\def\Xts{\tfrac{X(nt)-X(ns)-[n\vec{v}t]+[n\vec{v}s]}{\sqrt{n}}}
\def\l{\left}
\def\r{\right}
\def\Bl{\Biggl}
\def\Br{\Biggr}
\def\bl{\biggl}
\def\br{\biggr}
\def\supk{\sup_{kn^{-\beta}\leq t< (k+1)n^{-\beta}}}
\def\nb{n^{1-\beta}}
\def\sma{\sum_{m\in B_{n^{1/2+\alpha}}}}
\def\ptk{\phi(\Xt)-\phi(\tfrac{X(k\nb)-[k\nb v]}{\sqrt{n}})}
\def\nkb{kn^{1-\beta}}
\def\nk1{(k+1)n^{1-\beta}}
\def\k{{\bf  \kappa}}
\def\a{{\bf a}}
\def\S{ \mathcal S(\mathbb R^d)}
\def\Sdual{ \mathcal S'(\mathbb R^d)}
\def\B{\k B}

\def\smG{\sum_{m\in B_{n^{1/2+\alpha}}}\seta {\bf 1}\l\{G_{m,j}^{[a,b)}\r\}}

\def\wk{\tilde{w}_{\xi_n}(J_{k,n})}

\section{Introduction}\label{sec:intro}
In a system of independent random walks in dimension $d\geq 1$, the hydrodynamic limit of particle density, under Euler scaling, satisfies the scalar  conservation law
\begin{equation}
\label{conservation_law}
 \partial_t u(x,t)+ \nabla_x\cdot f (u(x,t))=0, \end{equation} with  $u(x,0)=u_0(x)$ as the initial particle density and where $t\geq 0$, $x\in\mathbb R^d$ and $f(u)=u\v$, $\v$ being the mean drift of the random walks. 
 What we mean by this hydrodynamic limit is the following. If we start with a sequence of initial particle configurations $\{\eta_0^n(m):m\in\Zd\}$ satisfying \[\frac{1}{n^d}\sum_{x\in nA\cap \Zd}\eta_0^n(x)\longrightarrow \int_A u_0(x)dx \text{  in probability}\] where $A\subset \mathbb R^d$  is a bounded box,
 then  the particle configurations at subsequent times, $\eta_t^n(\cdot)$, satisfy 
 \[\frac{1}{n^d}\sum_{x\in nA\cap \Zd}\eta_{nt}^n(x)\longrightarrow \int_A u(x,t)dx \text{  in probability} \]where $u(x,t)$ satisfies \eqref{conservation_law} with initial condition $u(x,0)=u_0(x)$. 
 This hydrodynamic limit picture indicates that on time and space  scales of $O(n)$ we see shifts in the particle profile along characteristic lines of \eqref{conservation_law} 
given by $ny+n\vec{v}t$, $y\in\mathbb R^d$. To capture diffusive fluctuations about these
characteristic lines, we consider a box of radius $O(\sqrt{n})$ moving with the characteristic velocity $\v$, and we look at the net inward  flux of particles across the
boundary of this moving box.  The measurement of this net flux of particles gives us what we call the `box-current' process.  We generalize this box-current  process to  a distribution-valued process.  In section \ref{sec_current}, we  give a precise  definition of these current processes  and explain the connection between the current process and  fluctuations about the hydrodynamic limit.

The question we resolve in this paper is the size and the distribution of the scaling limit of current fluctuations. It turns out that the current process scaled by $n^{-d/4}$, where $n$ is the scaling parameter, gives us Gaussian scaling limits.
  The limiting distribution-valued current process can also be attained as the solution of a stochastic partial differential equation.    An alternate representation of the limiting current process as the solution of a stochastic differential equation is given in section  \ref{sec:spde}. The limiting current process is expressed as the sum of two independent  stochastic integrals, where one of the integrals is a generalized Ornstein-Uhlenbeck process.
     
  In 1985, current fluctuations in a system of independently moving particles was studied in \cite{Durr} in connection with the asymptotic behavior of trajectories in
an infinite particle system with collisions. Sepp\"al\"ainen, in \cite{Timo-IRW}, found the scaling limit of current across characteristics in the one dimensional system of asymmetric independent random walks. His work was extended in \cite{kumar} where shifts in characteristics at the diffusive scale were allowed. In both cases current fluctuations were of order $n^{1/4} $, where $n$ was the scaling factor, and Gaussian scaling limits were obtained. In \cite{kumar}, a representation of the limiting current process as the sum of two stochastic integrals was given for the one dimensional case which agrees with the results in section \ref{sec:spde} of this paper. While in the one dimensional system of independent random walks there is a connection between the position of a tagged particle under elastic collisions and a current process (see \cite{Durr}),  there is no such obvious connection in higher dimensions.

Equilibrium fluctuations in particle density, in a system of independent Markovian particles in multiple dimensions, was studied  by Anders Martin-L\"of in 1976 \cite{Martin-Lof}.  Our results about current fluctuations hold in the equilibrium as well as non-equilibrium case, under the assumption that the initial distribution of number of particles at each site on the lattice $\mathbb Z^d$ is i.i.d.  It is interesting to note that in the absence of fluctuations in the initial particle configuration (i.e. if we start with a deterministic initial configuration), our limiting current process is a generalized Ornstein-Uhlenbeck process.  Generalized Ornstein-Uhlenbeck processes appear as the fluctuation limits of several infinite particle systems in the literature, for example see chapter 11 of Kipnis and Landim  \cite{KL}, \cite{BG86}, \cite{HS78} and \cite{Martin-Lof}. 

Fluctuation results for other asymmetric models in multiple dimensions can be found in the literature.  We mention some of these. In multiple dimensions, equilibrium fluctuations for a tagged particle in asymmetric exclusion processes  and  in asymmetric zero range processes have been studied in \cite{Sethu_Vara_Yau, Sethu-ASEP} and  \cite{Sethu-ZRP} respectively.  The diffusion coefficient for the two dimensional asymmetric simple exclusion process was studied in \cite{Yau}.  In \cite{Sethu-1}, Sethuraman proves superdiffusivity  of  occupation-time variance in the two dimensional asymmetric exclusion process. 

   In this paper we extend the results of \cite{Timo-IRW} and \cite{kumar} to multiple dimensions. We also generalize their results from a simple real-valued current process to a distribution-valued current process. Our proof  parallels  the proof in \cite{Timo-IRW} and \cite{kumar} in several places.  Some significant differences  arise in the treatment of  a distribution-valued  process. As is typical for process-level distributional limits,  our proof is in two stages: first, we prove convergence of finite dimensional distributions and second, we prove tightness of the current process.  We omit parts of the proof that are similar to parts in \cite{kumar} and refer the reader to \cite{kumar} for details.
   
    This paper is structured as follows. Section \ref{sec:model} contains a  description of the independent random walks model, the current process 
    and a  statement of our results. The  stochastic partial differential equation satisfied by the limiting current process and its solution are also mentioned here. In section \ref{sec:finite-dim} we find the finite dimensional distribution limits of the process. We prove tightness of the process and complete the proof in section \ref{sec:tightness}.


\section{Model and Results}
\label{sec:model}
\subsection{Independent random walks model}
Let $d\geq 1$ denote the space dimension.  
We start with an initial configuration of particles on  $\mathbb Z^d$ denoted by $\{\eta_0(x):x\in\Zd\}$, i.e. $\eta_0(x)$ is the number of particles at site $x$ at time $0$. These particles evolve like independent, identically distributed (i.i.d.) continuous time random walks. Let $X_{m,j}(t)$, $m\in\Zd, j=1,\hdots,\eta_0(m)$ denote the position at time $t$ of the $j$-th random walk starting at site $m$.  Let  $\{\eta_t(x):x\in\Zd\}$ denote the occupation variables at time $t$ i.e. $\eta_t(x)$ is the number of random walks (particles) at site $x$, at time $t$. 
 Clearly,
\begin{align*}\eta_t(x):=\sm\seta {\bf 1}\{X_{m,j}(t)=x\},\end{align*}$x\in\Zd$.

The common jump rates of the random walks are given by the probability kernel $\{p(x): x\in \Zd\}$ where we assume \[\sum_{x\in \Zd}p(x)=1.\] The common transition probabilities of the random walks is given by 
\[P(X_{m,j}(s+t)=y|X_{m,j}(s)=x)=\sum_{k=0}^\infty \frac{e^{-t}t^k}{k!}p^{(k)}(y-x),\]
where \[p^{(k)}(z)=\sum_{x_1+x_2+\cdots+x_k=z}p(x_1)\cdotp(x_2)\cdots p(x_k)\] is the $k-$fold convolution of the kernel $p(x)$.

We make the following assumptions.
\begin{assmpn}\label{configuration}
The initial occupation variables $\{\eta_0(m):m\in\Zd\}$ are i.i.d. random variables with exponential moments i.e. \[Ee^{\theta\eta_0(m)}<\infty\]
for some $\theta>0$. Let $\rho_0:=E\eta_0(m)$ and $v_0:=\Var\eta_0(m)$.
\end{assmpn}

\begin{assmpn}\label{rw-assmpn}
Given $\{\eta_0(m):m\in\Zd\}$, the evolution of the random walks $\{X_{m,j}(t)-X_{m,j}(0):m\in\Zd, j=1,\hdots , \eta_0(m)\}$ are i.i.d. continuous time random walks on $\mathbb Z^d$ starting at the origin,  independent of $\eta_0(\cdot)$. We can assume without loss of generality that  the common jump probability kernel $\{p(x):x\in\Zd\}$ is not supported on a hyperplane in $\mathbb R^d$. Let \[\v:=\sum_xxp(x)\] denote average velocity of the random walks. 
Define a $d\times d$ matrix  ${\bf a}$, related to the second moments of the random walks,  as 
 \[{\bf a}_{i,j}:=\sum_{x=(x_1,\hdots, x_d)\in \Zd}x_ix_jp(x),\] and  choose matrix $\k$ such that \[\k\k^T= {\bf a}.\]
\end{assmpn}

\begin{assmpn}\label{rw-moment}
\[\sum_{x\in\Zd }e^{\delta |x|}p(x)<\infty\]
for some $\delta>0$.
\end{assmpn} 

\subsection{Some standard notation}
We define the norm of   $x=(x_1,\hdots,x_d)\in\mathbb R^d$ as \[|x|:=\max_{1\leq i\leq d}|x_i|.\]
For $x\in\mathbb R$, define
\begin{equation}
\label{integer-part}
[x]:=\begin{cases}
\text{largest integer }\leq x, &\text{ if }x\geq 0\\
\text{smallest integer }\geq x, &\text{if }x\leq 0.
 \end{cases}
 \end{equation}
 Define
 \[[x]:=([x_1],\hdots,[x_d])\]  for  $x=(x_1,x_2,\hdots, x_d)\in\mathbb R^d$.\\
Let 
 \[\partial^\alpha:=\l(\frac{\partial}{\partial x_1}\r)^{\alpha_1}\cdots \l(\frac{\partial}{\partial x_d}\r)^{\alpha_d}\]  for a multi-index  $\alpha=(\alpha_1,\hdots ,\alpha_d)$ with $\alpha_i\geq 0$.
For any nonnegative integer $N$ and multi-index $\alpha=(\alpha_1,\hdots ,\alpha_d)$,  $\alpha_i\geq 0$, define
 \[||f||_{(N,\alpha)}:=\sup_{x\in\mathbb R^d}\l\{(1+|x|)^N|\partial^\alpha f(x)|\r\}.\] 
 Let $\mathcal S(\mathbb R^d)$ denote the space of Schwartz functions. 
 Recall that this space of functions is defined as 
 \[\mathcal S(\mathbb R^d):=\{f\in C^\infty(\mathbb R^d):||f||_{(N,\alpha)}<\infty\  \forall N,  \alpha \}.\]
We denote the dual of the Schwartz space as $\Sdual$. 
For any bounded subset $B$ of $\mathcal S(\mathbb R^d)$, let 
\[q_B(f):=\sup_{\phi\in B}|f(\phi)|,\ \ \ \ \ \ \ f\in\Sdual.\] Then $\{q_B\}$ is a family of semi-norms on $\Sdual$ which defines the strong topology on $\Sdual$. 
 Fix $T>0$, let $D([0,T],\Sdual)$ be the space of mappings from $[0,T]$ to $\Sdual$ that are right continuous and have left-hand limits in the strong topology of $\Sdual$.

\subsection{The current process and fluctuations about the hydrodynamic limit}\label{sec_current}
Let \[B_M:=\{x\in\mathbb R^d:|x|\leq M\}\] be a box centered at the origin with radius $M$.  Suppose this box is centered about an observer who starts at the origin and moves with constant velocity $\v$. 
We define the `box-current process' as the net inward flux of particles across the boundary of this moving box, over time. Thus the  box-current at time $t$ is simply the difference between the number of particles inside the box at time $t$ and the number of particles initially inside the box. 
Scaling space by $\sqrt{n}$ and time by $n$, we construct a sequence of scaled box-current processes given by 
 \begin{align*}\xi_n(t,{\bf 1}_{B_M}):&=n^{-d/4}\sum_{i\in  B_{M\sqrt{n}}}\left\{\eta_{nt}(i+[n\vec{v}t])-\eta_0(i)\right\}\\
&=n^{-d/4}\sm\seta\l[{\bf 1}\{X_{m,j}(nt)\in B_{M\sqrt{n}}+[n\vec{v}t]\}-{\bf 1}\{m\in B_{M\sqrt{n}}\}\r].\end{align*}
 In  words, $\xi_n(t,{\bf 1}_{B_M})$ measures the cumulative  net current of particles into the box $B_{M\sqrt{n}}$ up to time $nt$ as the box moves with fixed velocity $\v$.

We generalize the above box-current process to a distribution valued process by taking the  weighted average of the differences $\{\eta_{nt}(i+[n\vec{v}t])-\eta_0(i)\}$, where  the weights are given by values of a Schwartz funcion. For $\phi\in \mathcal S(\mathbb R^d)$ and $t\in[0,T]$, 
define
\begin{equation}
\label{current process}\xi_n(t,\phi):=n^{-d/4}\sum_{i\in \mathbb Z^d}\left\{\phi\left(\tfrac{i}{\sqrt{n}}\right)\left(\eta_{nt}(i+[n\vec{v}t])-\eta_0(i)\right)\right\}.\end{equation}
This is our current process. Observe that $\xi_n(\cdot,\cdot)\in D([0,T],\Sdual)$.
We can rewrite $\xi_n(t,\phi)$ as 
\begin{equation}
\label{current process-2}\begin{split}&\xi_n(t,\phi)\\
&= n^{-d/4}\sum_{i\in \mathbb Z^d}\phi\left(\tfrac{i}{\sqrt{n}}\right)\left\{\sm\seta \l[{\bf 1}\{X_{m,j}(nt)=i+[n\vec{v}t]\}-{\bf 1}\{m=i\}\r]\right\}\\
&=n^{-d/4}\sum_{m\in\Zd}\sum_{j=1}^{\eta_0(m)}\left[\phi\left(\Xt\right)\l(\sum_{i\in \mathbb Z^d}{\bf 1}\{X_{m,j}(nt)=i+[n\vec{v}t]\}\r)-\phi\left(\tfrac{m}{\sqrt{n}}\right)\right]\\
&=n^{-d/4}\sum_{m\in\Zd}\sum_{j=1}^{\eta_0(m)}\left[\phi\left(\Xt\right)-\phi\left(\tfrac{m}{\sqrt{n}}\right)\right].\end{split}\end{equation}
In the last line of \eqref{current process-2} we are summing over all  random walks and adding their contributions to the current. This is a more tractable form of the current  process compared to its initial  definition in \eqref{current process} and we will use this definition henceforth for the current process.

As in \cite{Timo-IRW}, here too there is a connection between the current process and what the author in \cite{Timo-IRW} termed the  ``second-order fluctuations" from the hydrodynamic limit. We explore this connection below. Let $(\Omega, \mathcal F, \mathcal P)$ be the underlying probability measure space  for our  independent random walk process. By our assumptions, under $\mathcal P$, the initial occupation variables $\{\eta_0(m),m\in\Zd\}$ are i.i.d. random variables and are independent of  the evolution of the random walks $\{X_{m,j}(\cdot)-X_{m,j}(0): m\in \Zd, j=1,\hdots \eta_0(m)\}$. Under these assumptions we get
\begin{equation}
n^{-d/2}\sum_{i\in\Zd}\phi\l(\frac{i}{\sqrt{n}}\r)\eta_{nt}(i+[n\v t])\xrightarrow[]{n\to\infty}\rho_0\int_{x\in\mathbb R^d}\phi(x)dx \quad \text{in }L^2(\mathcal P) \text{ for all }t\geq 0,
\end{equation} 
where $E[\eta_0(m)]=\rho_0$. (For general initial conditions with $E[\eta_0^n(x)]=\rho_0(\frac{x}{\sqrt{n}})$, the right hand side would be $\int\rho(x,t)\phi(x)dx$ where $\rho(x,t)$ is the solution of a diffusion equation with initial condition $\rho_0(\cdot)$. Since $\rho_0(x)\equiv \rho_0$ is a constant in our problem, we get $\rho(x,t)\equiv \rho_0$.)

We can express $\xi_n(t,\phi)$ as the difference of two terms:
\begin{align*}
\xi_n(t,\phi)
&=n^{d/4}\biggl[\l(n^{-d/2}\sum_{i\in \mathbb Z^d}\phi\left(\tfrac{i}{\sqrt{n}}\right)\eta_{nt}(i+[n\vec{v}t])-\rho_0\int_{x\in\mathbb R^d}\phi(x)dx\r)\\
&\qquad \qquad -\l(n^{-d/2}\sum_{i\in \mathbb Z^d}\phi\left(\tfrac{i}{\sqrt{n}}\right)\eta_0(i)-\rho_0\int_{x\in\mathbb R^d}\phi(x)dx\r)\biggr].
\end{align*}
Using the observer as a frame of reference, the first term is the fluctuation from the  hydrodynamic limit  at time $t$ and the second term is the fluctuation from the hydrodynamic limit at time $0$. 
Thus,  $\xi_n(t,\phi)$ is obtained by subtracting the initial fluctuations  from the hydrodynamic limit  from the fluctuations at time $t$ from the hydrodynamic limit, as seen by the observer, and scaling this difference by $n^{d/4}$.  The current process  therefore looks at fluctuations beyond the rigid translation of fluctuations in the initial configuration along the characteristic line $y=x+\v t$. \\

\subsection{Results}
Recall that $\{p(x), x\in\Zd\}$ denote the common jump probabilities of the random walks. Let $X(t)$ be a continuous time random walk starting at the origin, with jump probabilities $p(x)$. Henceforth, in this paper, $X(t)$ will always denote such a random walk.
By the martingale central limit theorem (refer pages 339-340 in \cite{Ethier-Kurtz}), we have \[\frac{X(nt)-[n\vec{v}t]}{\sqrt{n}}\Longrightarrow \k B(t)\]  where  $B(t)$ is standard d-dimensional Brownian motion. 
The probability density function of $\k B(t)$ is given by \begin{equation}\label{pdf}p_t(x)dx:=P(\k B(t)\in dx)=\dfrac{e^{-\frac{1}{2t}\sum_{i,j}{\bf a}^{-1}_{i,j}x_ix_j}}{\l(2\pi t\r)^{d/2}\sqrt{det({\bf a})}}dx.\end{equation}
(Note: Since  $p(x)$ is not supported on a hyperplane in $\mathbb R^d$, ${\bf a}$ is positive definite.)

Define
\begin{equation}\label{cov_1}\begin{split}\sigma_1\left((s,\phi),(t,\psi)\right):=&\int_{\mathbb R^d}\int_{\mathbb R^d}\phi(y)\psi(z) p_{|t-s|}(z-y)dzdy\\
 &-\int_{\mathbb R^d}\int_{\mathbb R^d}\phi(y)\psi(z)p_{t+s}(z-y)dzdy\end{split}\end{equation}
 and 
 \begin{equation}\label{sigma_2}
 \begin{split}\sigma_2&\left((s,\phi),(t,\psi)\right):=\int_{\mathbb R^d}\int_{\mathbb R^d}\phi(y)\psi(z)p_{t+s}(z-y)dzdy\\
 &-\int_{\mathbb R^d}\int_{\mathbb R^d}\phi(y)\psi(z) p_{t}(z-y)dzdy-\int_{\mathbb R^d}\int_{\mathbb R^d}\phi(y)\psi(z) p_{s}(z-y)dzdy\\
&+\int_{\mathbb R^d}\phi(x)\psi(x)dx
\end{split}
\end{equation}
where $p_t(x)$ is the probability density function of $\k B(t)$ given in \eqref{pdf}.
\begin{theorem}\label{main_thm}
Under Assumptions \ref{configuration} - \ref{rw-moment}, as $n\to\infty$, $\xi_n(\cdot,\cdot)\to\xi(\cdot,\cdot)$ in distribution in  $D([0,T],\Sdual)$  where $\xi(\cdot,\cdot)$ is a mean zero, distribution-valued Gaussian process with covariance function
\begin{equation}\label{covariance-fn}
E\xi(s,\phi)\xi(t,\psi)=\rho_0\sigma_1\left((s,\phi),(t,\psi)\right)+v_0\sigma_2\left((s,\phi),(t,\psi)\right).
\end{equation}
\end{theorem}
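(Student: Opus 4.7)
The plan is the standard two-stage strategy for convergence of c\`adl\`ag Schwartz-distribution-valued processes: first convergence of finite-dimensional distributions, then tightness in $D([0,T],\Sdual)$. The structural feature that makes both stages tractable is that in the form \eqref{current process-2},
\[
\xi_n(t,\phi) \;=\; n^{-d/4}\!\sum_{m\in\Zd} Y_m^n(t,\phi),\qquad Y_m^n(t,\phi) := \sum_{j=1}^{\eta_0(m)}\!\bigl[\phi(\Xt)-\phi(m/\sqrt{n})\bigr],
\]
where the variables $\{Y_m^n\}_{m\in\Zd}$ are mutually independent and, within a site $m$, after conditioning on $\eta_0(m)$ the summands over $j$ are i.i.d.

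\emph{Finite-dimensional distributions.} For a finite collection $(t_i,\phi_i)_{i\le k}$ and coefficients $a_i\in\mathbb{R}$, I would write $\sum_i a_i\,\xi_n(t_i,\phi_i) = n^{-d/4}\sum_m \tilde{Y}_m^n$ with $\tilde{Y}_m^n := \sum_i a_i Y_m^n(t_i,\phi_i)$, and apply the Lindeberg--Feller CLT. The Lindeberg condition reduces to a uniform $L^{2+\delta}$ estimate on $\tilde{Y}_m^n$, which follows from Assumption \ref{configuration} (exponential moments of $\eta_0$) together with boundedness of the Schwartz functions $\phi_i$. For the limit covariance I would condition on $\eta_0(m)$ and exploit the i.i.d.\ structure of the walks at site $m$ to split each site-$m$ covariance as $\rho_0\,\Cov(A,B)+v_0\,E[A]\,E[B]$, where $A,B$ denote the single-walk increments at times $s,t$; this Poisson-type split is exactly what produces the two-term structure of \eqref{covariance-fn}. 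The outer sums $n^{-d/2}\sum_m$ then become Riemann approximations to integrals over $\mathbb{R}^d$, and after the martingale CLT replaces the walk law by the Gaussian density $p_t$ of \eqref{pdf} and the convolution identity $p_s * p_t = p_{s+t}$ is applied, the resulting expressions collapse to $\rho_0\sigma_1+v_0\sigma_2$ as in \eqref{cov_1}--\eqref{sigma_2}.

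\emph{Tightness.} I would invoke Mitoma's theorem to reduce tightness of $\{\xi_n\}$ in $D([0,T],\Sdual)$ to tightness of each real-valued marginal process $\{\xi_n(\cdot,\phi)\}_n$ in $D([0,T],\mathbb{R})$ for fixed $\phi\in\S$. For each such $\phi$ I would verify Aldous's criterion via a modulus-of-continuity estimate (consistent with the macro $\wk$), or equivalently a fourth-moment Kolmogorov--Chentsov bound $E|\xi_n(t,\phi)-\xi_n(s,\phi)|^4\le C(t-s)^{1+\gamma}$. The natural implementation, guided by the macros involving the spatial ball $B_{n^{1/2+\alpha}}$ and the time mesh $n^{-\beta}$, is to first truncate the sum over $m$ to $B_{n^{1/2+\alpha}}$, controlling the spatial tail by the Schwartz seminorms $\|\phi\|_{(N,\alpha)}$ and Assumption \ref{rw-moment} (exponential jump tails); then to discretise $[0,T]$ at mesh $n^{-\beta}$; and finally to control suprema inside each mesh interval by another independence-based moment estimate, making repeated use of the same Poisson-type decomposition used in the covariance computation.

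\emph{Main obstacle.} The fourth-moment increment bound is the delicate step. A raw single-walk increment $\phi(\Xt)-\phi(\Xs)$ is of order $|t-s|^{1/2}$ only in a mean-square sense, so squaring twice and summing over the $O(n^{d/2+d\alpha})$ effective sites naively produces an exponent $(t-s)^1$ rather than $(t-s)^{1+\gamma}$. Recovering the extra factor requires careful use of the cancellation between within-site and cross-site pair contributions together with the exponential-tail walk estimates of Assumption \ref{rw-moment}, and is the step where the higher-dimensional geometry and Schwartz decay of $\phi$ make the argument genuinely heavier than in the one-dimensional case; this is where I expect to follow—and need to extend—the one-dimensional calculations referenced in the introduction.
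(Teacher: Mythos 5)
Your finite-dimensional step is essentially the paper's argument (Lindeberg--Feller plus the Poisson-type split of the site covariance into $\rho_0\Cov+v_0\,E\cdot E$), up to one omission: since Lindeberg--Feller is applied to a truncated array $|m|\leq r(n)\sqrt{n}$, you also need to show the far-field sum $\sum_{|m|>r(n)\sqrt{n}}$ is negligible in $L^2$, which in $d\geq 1$ requires the rapid decay of $\phi$ (this is the part the paper singles out as genuinely different from the one-dimensional case). The real problem, however, is your tightness plan. A Kolmogorov--Chentsov bound $E|\xi_n(t,\phi)-\xi_n(s,\phi)|^4\leq C(t-s)^{1+\gamma}$ \emph{uniformly in $s,t$ and $n$} cannot hold, and no cancellation between within-site and cross-site terms will rescue it: for fixed $n$, $\xi_n(\cdot,\phi)$ is a pure-jump process, and for $t-s$ much smaller than $n^{-1}$ the increment is, up to constants, (one jump of size $\asymp n^{-d/4}$) times an indicator whose probability is linear in $t-s$; hence $E|\Delta\xi_n|^4\asymp c_n(t-s)$, exponent exactly one, at small scales. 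This is not an artifact of crude estimates (cross-site contributions already vanish by independence and centering), so the fourth-moment route is structurally blocked, as is any moment bound claimed at all time scales simultaneously.

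The paper's way around this is a genuine two-scale argument that your sketch does not contain. First, the moment bound (Lemma \ref{tight-criterion-1}) is proved only for $t-s\geq n^{-\beta}$ with $\beta$ as in \eqref{beta}, and it uses moments of order $2r$ with $r>2\beta>2$ as in \eqref{r}: the per-site estimate gives $E|\Delta\xi_n|^{2r}\leq c\,[\,n^{-rd/2}+n^{-r/2}+(t-s)^{r/2}\,]$, and only for such large $r$, and only on the restricted range $t-s\geq n^{-\beta}$, do the additive $n^{-r/2}$-type errors get absorbed into $(t-s)^{\sigma}$ with $\sigma>1$; a fourth moment ($r=2$) yields exponent $1$ even there. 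Second, the scales $t-s<n^{-\beta}$ are handled by an entirely separate modulus-of-continuity estimate \eqref{modulus} in the style of Proposition 5.7 of D\"urr--Goldstein--Lebowitz (not by moments of increments): one partitions $[0,T]$ into $O(n^{\beta})$ intervals on which $[n\vec{v}t]$ is constant, bounds the contribution of particles started in $B_{n^{1/2+\alpha}}$ by the number of their jumps in an interval of length $n^{1-\beta}$ via an exponential Markov/Poisson estimate (this is what forces $\beta\geq d/4+d\alpha+1$), and controls particles started outside $B_{n^{1/2+\alpha}}$ by a Doob/Markov large-deviation bound combined with the Schwartz decay of $\phi$. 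Your sketch gestures at the truncation to $B_{n^{1/2+\alpha}}$ and the mesh $n^{-\beta}$, but the ``independence-based moment estimate'' you propose inside each mesh interval cannot replace the jump-counting argument, for the same reason the fourth-moment bound fails: below the scale $n^{-\beta}$ the increments are jump-dominated and no power-of-$(t-s)$ moment bound with exponent $>1$ is available.
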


\begin{theorem}\label{thm2}
Under Assumptions \ref{configuration} - \ref{rw-moment}, as $n\to\infty$, $\xi_n(\cdot,{\bf 1}_{B_M})$ converges in distribution in $D([0,T],\mathbb R)$ to a mean zero Gaussian process with covariance as in \eqref{covariance-fn} where $\phi=\psi={\bf 1}_{B_M}$.
\end{theorem}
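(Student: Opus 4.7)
Since $\mathbf{1}_{B_M}$ is not a Schwartz function, Theorem \ref{main_thm} does not directly yield Theorem \ref{thm2}. The plan is to sandwich $\mathbf{1}_{B_M}$ between smooth Schwartz approximations and then pass to a limit. For each $\epsilon > 0$ I would pick $\phi_\epsilon^-, \phi_\epsilon^+ \in \S$ with $0 \le \phi_\epsilon^- \le \mathbf{1}_{B_M} \le \phi_\epsilon^+ \le 1$ and each equal to $\mathbf{1}_{B_M}$ outside an $\epsilon$-neighborhood of $\partial B_M$. The difference $\psi_\epsilon := \phi_\epsilon^+ - \phi_\epsilon^-$ is then a nonnegative Schwartz function supported in a shell of width $2\epsilon$ around $\partial B_M$, with $\int\psi_\epsilon = O(\epsilon)$.

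By Theorem \ref{main_thm}, for each fixed $\epsilon > 0$, $\xi_n(\cdot, \phi_\epsilon^\pm) \Rightarrow \xi(\cdot, \phi_\epsilon^\pm)$ in $D([0,T], \mathbb R)$. Inspection of \eqref{cov_1}--\eqref{sigma_2} shows that $\sigma_1$ and $\sigma_2$ extend continuously in the pair $(\phi, \psi)$ from $\S \times \S$ to bounded measurable compactly supported test functions; in particular they are well defined with $\phi = \psi = \mathbf{1}_{B_M}$. Consequently, as $\epsilon \to 0$ the Gaussian processes $\xi(\cdot, \phi_\epsilon^\pm)$ converge in finite-dimensional distributions to the centered Gaussian process with covariance \eqref{covariance-fn} specialized to $\phi = \psi = \mathbf{1}_{B_M}$, which is the candidate limit.

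The essential analytic task is the uniform-in-$n$ error estimate
\begin{equation*}
\lim_{\epsilon \to 0}\limsup_{n \to \infty} E\!\left[\sup_{0 \le t \le T} |\xi_n(t, \psi_\epsilon)|^2\right] = 0.
\end{equation*}
For the pointwise variance $E[\xi_n(t, \psi_\epsilon)^2]$, I would re-use the second-moment decomposition behind Theorem \ref{main_thm}: by independence of the initial configuration and random-walk motions (Assumptions \ref{configuration}--\ref{rw-assmpn}) the variance splits into a $\rho_0$-weighted random-walk contribution and a $v_0$-weighted initial-fluctuation contribution, both of which are Riemann sums asymptotic to $\rho_0\sigma_1((t,\psi_\epsilon),(t,\psi_\epsilon)) + v_0\sigma_2((t,\psi_\epsilon),(t,\psi_\epsilon))$ via the local CLT for $X(nt)/\sqrt n$ (justified by Assumption \ref{rw-moment}). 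Each $\sigma_i$ term is $O(\epsilon)$ because $\psi_\epsilon$ is supported in a shell of Lebesgue measure $O(\epsilon)$ and the densities $p_t$ are bounded away from $t=0$, while the near-diagonal singularity of $\sigma_1$ is cancelled by the difference $p_{|t-s|} - p_{t+s}$ at $s=t$.

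Upgrading this pointwise bound to a supremum-in-$t$ bound is the main obstacle. The cleanest route is to re-examine the tightness argument of Section \ref{sec:tightness} and verify that its Chentsov/Aldous-type moment estimate for $|\xi_n(t,\phi)-\xi_n(s,\phi)|$ depends on the test function $\phi$ only through quantities (e.g. $\|\phi\|_\infty$, $\|\phi\|_1$, and support size) that remain small for $\phi = \psi_\epsilon$; combining such a modulus-of-continuity estimate with the pointwise variance bound yields the desired supremum bound. Once this is in hand, convergence of finite-dimensional distributions of $\xi_n(\cdot, \mathbf{1}_{B_M})$ follows by a standard approximation argument using the sandwich $\phi_\epsilon^- \le \mathbf{1}_{B_M} \le \phi_\epsilon^+$ and sending $\epsilon \to 0$ after $n \to \infty$, and tightness in $D([0,T], \mathbb R)$ follows from the tightness of $\xi_n(\cdot, \phi_\epsilon^-)$ supplied by Theorem \ref{main_thm} combined with the uniform smallness of $\xi_n(\cdot, \psi_\epsilon)$.
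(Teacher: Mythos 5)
Your proposal takes a genuinely different route from the paper. The paper does not approximate: it simply reruns the proof of Theorem \ref{main_thm} with $\phi=\mathbf{1}_{B_M}$, observing that the only step that breaks for a non-smooth test function is Lemma \ref{moment_bound_A}, whose mean-value-theorem/rapid-decay argument it replaces by a direct estimate of $\sum_m [P(C_m)+P(D_m)]$, where $C_m,D_m$ are the events that the walk started at $m$ is inside the moving box at one of the two times and outside at the other; the second tightness condition and the finite-dimensional argument go through verbatim (in fact more easily, since $\mathbf{1}_{B_M}$ has compact support). Your approximation scheme could in principle work, and the pointwise variance bound $\Var\,\xi_n(t,\psi_\epsilon)=O(\epsilon)$ uniformly in large $n$ is plausible and provable along the lines you sketch.

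However, there is a genuine gap at the step you yourself identify as essential. First, the functions you must actually control are $\chi_\epsilon^\pm=\mathbf{1}_{B_M}-\phi_\epsilon^\pm$, which are discontinuous and hence not in $\mathcal S(\mathbb R^d)$; since $\eta_{nt}-\eta_0$ is signed, $\xi_n(t,\cdot)$ is not monotone in the test function, so the sandwich buys you nothing beyond linearity, and neither Theorem \ref{main_thm} nor the lemmas of Section \ref{sec:tightness} apply to $\chi_\epsilon^\pm$ as stated. You are therefore forced to prove indicator-type moment and tightness bounds anyway, which is exactly the content of the paper's modification, so the detour does not avoid the real work. Second, the justification offered for the key uniform estimate $\limsup_n E[\sup_{t\le T}|\xi_n(t,\psi_\epsilon)|^2]\to 0$ is not accurate: the increment bound of Lemma \ref{tight-criterion-1} rests, via Lemma \ref{moment_bound_A}, on the pointwise bound $|D\phi(x)|\le c_N(1+|x|)^{-N}$, and for $\psi_\epsilon$ one has $\|D\psi_\epsilon\|_\infty\sim \epsilon^{-1}$ and $\|\psi_\epsilon\|_\infty=1$, so the constant $C_r$ does not visibly shrink with $\epsilon$; moreover the increment estimate is only valid for $|t-s|\ge n^{-\beta}$, so passing from fixed-time variance bounds to a supremum over $t$ requires a maximal-inequality argument (covering the short time scales handled in the paper by the jump-counting bound of Lemma \ref{tight-2}) that the proposal does not supply. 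Until these two points are filled in, both the finite-dimensional limit and the tightness of $\xi_n(\cdot,\mathbf{1}_{B_M})$ remain unproved in your scheme.
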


{\it Example.} We calculate the covariance terms \eqref{cov_1} and \eqref{sigma_2} for a simple example. Take $\phi\equiv \psi:={\bf 1}_{B_M}=\prod_{i=1}^d{\bf 1}_{[-M,M]}$ and ${\bf a}=I_{d\times d}$. Let $\Phi_u(\cdot)$ denote the cumulative distribution function of a mean-zero real-valued Gaussian random variable (i.e. $\Phi_u(x):=\frac{1}{\sqrt{2\pi u}}\int_{-\infty}^xe^{-\frac{x^2}{2u}}$) and define
\[I_u=\l[2M\l\{\Phi_u(2M)-\Phi_u(-2M)\r\}+2\sqrt{\frac{u}{2\pi}}\l(e^{\frac{-2M^2}{u}}-1\r)\r]^d.\]   
Then
\[\sigma_1((s,\phi), (t,\psi))=I_{|t-s|}-I_{t+s},\] and
\[\sigma_2((s,\phi),(t,\psi))=I_{t+s}-I_t-I_s+(2M)^d.\]

{\it Remark 1.} In the long-run, the current process is asymptotically a stationary Gaussian process. This can be observed by taking the long time asymptotic limit of the covariance terms \eqref{cov_1} and \eqref{sigma_2}.\\

{\it Remark 2.}
If we start with an initial configuration where $\eta_0(\cdot)$ are i.i.d. $Poisson(\lambda)$ random variables, the system of independent random walks is in equilibrium and we get $\eta_t(\cdot)$ are i.i.d.\ $Poisson(\lambda)$ for all $t\geq 0$. Under this invariant distribution of particles, the limiting current process in \cite{kumar} and \cite {Timo-IRW} are fractional Brownian motion with Hurst parameter $1/4$. In the present paper, under this invariant distribution of particles, $\rho_0=v_0=\lambda$ and
\begin{equation}\label{stationary-increments}\begin{split}
E[\xi(t,\phi)-&\xi(s,\psi)]^2=E[\xi(t,\phi)^2+\xi(s,\phi)^2-2\xi(t,\phi)\xi(s,\phi)]\\
&=\lambda \l[\int_{\mathbb R^d}\phi(x)\psi(x)dx-\int_{\mathbb R^d}\int_{\mathbb R^d}\phi(y)\psi(z)p_{t-s}(z-y)dzdy\r].
\end{split}\end{equation}
Thus, under the invariant distribution, $\xi(\cdot,\phi)$ has stationary increments, as $E(\xi(t,\phi)-\xi(s,\phi))^2$  is a function of $t-s$. The time-indexed process $\xi(\cdot,\phi)$ lacks the self-similarity property of one-paramter fBM. 
 However,  the process $\xi(t,\phi)$  satisfies the following self-similarity property:
\[\xi(a t,\phi\circ\eta_a)\overset{d}{=}a^{d/2}\cdot\xi(t,\phi),\quad \text{for }a>0,\]
where $\eta_a(x)=\frac{x}{\sqrt{a}}, x\in\mathbb R^d$; this is evident from the form of the covariance terms \eqref{cov_1} and \eqref{sigma_2}.


The proof of both theorems involves two stages: first, showing convergence of finite dimensional distributions and second, proving tightness of the sequence of processes.
We use Mitoma's theorem \cite{Mitoma} to prove tightness of $\{\xi_n(\cdot,\cdot)\}\subset D([0,T],\Sdual)$.  According to Mitoma's theorem, it is sufficient to prove tightness for the sequence $\{\xi_n(\cdot, \phi)\}\subset D([0,T],\mathbb R)$ for each $\phi\in\S$. We use the tightness criteria in \cite{Durr} to prove tightness of the sequence $\{\xi_n(\cdot, \phi)\}\subset D([0,T],\mathbb R)$. A description of  Schwartz space $\S$, it's dual space $\Sdual$ and Mitoma's theorem can be found in \cite{Kallianpur}.

\def\A{\mathcal{A}}
\def\F{ \mathcal{F}}

\subsection{Stochastic integral representation}\label{sec:spde}
 The limiting current process in Theorem \ref{main_thm} has the same distribution as the solution of the stochastic partial differential equation given below in \eqref{spde}.
 
 Define operator $\A$ on $\S$  (the infinitesimal generator of $\k B(t)$) as
\begin{equation}
\A\phi(x):= \frac{1}{2}\sum_{i,j=1}^d\a_{i,j}\partial_{ij}\phi(x).
\end{equation}
Let $\{T_t, t\geq 0\}$ denote the semigroup associated to $\A$. Then \[T_t(\phi)(x)=\int_{\mathbb R^d}\phi(y)p_t(y-x)dy.\]
Define covariance functions
\begin{equation}
Q_1(\phi,\psi):=\int_{\mathbb R^d}\sum_{i,j=1}^d\a_{i,j}\partial_i\phi(x)\partial_j\psi(x)dx,
\end{equation}
and
\begin{equation}
Q_2(\phi,\psi):=\int_{\mathbb R^d}\A\phi(x)\A\psi(x)dx.
\end{equation}

Consider the following spde,
\begin{equation}
\label{spde}
dZ(t,\cdot)=\sqrt{\rho_0}dW^{\centerdot}_t+\A Z(t,\cdot)dt+\sqrt{v_0}\F(\cdot)dt; \qquad Z(0,\cdot)=0,
\end{equation}
where $W_t$ is a centered $\Sdual$-Wiener process with covariance $Q_1(\cdot.\cdot)$, $\{\F(\phi), \phi\in\S\}$ is a Gaussian random field on $\S$
 with covariance $Q_2(\cdot,\cdot)$ and $\F$ and $W_t$ are independent of each other.
We can write 
\begin{equation}\label{Weiner-process}
W_t^\phi= \int_{[0,t]\times \mathbb R^d}\nabla \phi(x)\cdot \k W_1(dtdx)
\end{equation}
where $W_1(t,x)$ is $d$-dimensional space-time white noise on $\mathbb R_+\times \mathbb R^d$.
\begin{equation}
\F(\phi)= \int_{\mathbb R^d}\A\phi(x)W_2(dx)
\end{equation}
where $W_2(x)$ is a one-dimensional white noise on $\mathbb R^d$ independent of the white noise $W_1$.
Observe that the stochastic differential equation \eqref{spde} has two independent sources of randomness and thus we get  the solution to \eqref{spde} to be the sum of two independent stochastic integrals
\begin{equation}
\label{soln}\begin{split}
Z(t,\phi)=&\sqrt{\rho_0}\int_{[0,t]\times\mathbb R^d}\nabla\l(T_{t-s}\phi(x)\r)\cdot\k W_1(dxds)\\&\qquad\qquad\qquad\qquad+\sqrt{v_0}\int_0^t\int_{\mathbb R^d}\A T_{t-s}\phi(x) W_2(dx)ds.\end{split}
\end{equation}
Denote the two stochastic integrals  in \eqref{soln} as $Z_1(t,\phi)$ and $Z_2(t,\phi)$ so that
\[Z(t,\phi)=\sqrt{\rho_0}Z_1(t,\phi)+\sqrt{v_0}Z_2(t,\phi).\]
The first term $\sqrt{\rho_0}Z_1(t,\phi)$ represents space-time noise created by fluctuations in the random walks. The second term $\sqrt{v_0}Z_2(t,\phi)$ indicates the fluctuations in the initial configuration that get propagated as the system evolves. In the absence of fluctuations in the initial configuration, i.e. if we take the initial configuration to be deterministic ($v_0=0$), the second source of randomness in the spde \eqref{spde}, $\sqrt{v_0}\F(\cdot)dt$, disappears.  The spde \eqref{spde}  in this case then reduces to 
\[dZ(t,\cdot)=\sqrt{\rho_0}dW^{\centerdot}_t+\A Z(t,\cdot)dt, \]
where the solution 
\[Z(t,\phi)=\sqrt{\rho_0}\int_{[0,t]\times\mathbb R^d}\nabla\l(T_{t-s}\phi(x)\r)\cdot\k W_1(dxds)\] is a generalized Ornstein-Uhlenbeck process \cite{HS78}. 
 
For $t,s\in[0,T]$ and $\phi,\psi\in\S$,
\begin{equation*}\Cov (Z(t,\phi),Z(s,\psi))= \rho_0\Cov (Z_1(t,\phi),Z_1(s,\psi))+v_0\Cov (Z_2(t,\phi),Z_2(s,\psi)).\end{equation*}
Elementary calculations show that 
\begin{equation*}
\Cov (Z_1(t,\phi),Z_1(s,\psi))=\sigma_1\left((s,\phi),(t,\psi)\right)\end{equation*}
and 
\begin{equation*}
\Cov (Z_2(t,\phi),Z_2(s,\psi))=\sigma_2\left((s,\phi),(t,\psi)\right).
\end{equation*}
We can  conclude that  the solution of the spde and the limiting current process in Theorem \ref{main_thm} are equal in distribution as they are both mean-zero, distribution-valued Gaussian processes with the same covariance.

\section{Convergence of finite dimensional distributions}
\label{sec:finite-dim}
 We begin the proof of Theorems \ref{main_thm} and \ref{thm2} with showing convergence of finite dimensional distributions.  \\
{\bf  Note:}  Henceforth, in this paper, `$c$' will denote constants that change from line to line in calculations.\\
Fix $N\in\mathbb N$ and choose $(t_i,\phi_i)\in [0,T]\times \mathcal S(\mathbb R^d)$ for $i=1,\hdots,N$ such that 
$(t_i,\phi_i)\neq(t_j,\phi_j)$ for $i\neq j$.
 Let $(\theta_1,\hdots,\theta_N)\in\mathbb R^N$ be an arbitrary vector.

\begin{lemma}\label{finite-dim}
As $n\to\infty$, $\sum_{i=1}^N\theta_i\xi_n(t_i,\phi_i)$ converges to a mean-zero Gaussian random variable with variance \[\sigma^2=\sum_{i,j=1}^d\theta_i\theta_j\left(\rho_0\sigma_1\left((t_i,\phi_i),(t_j,\phi_j)\right)+v_0\sigma_2\left((t_i,\phi_i),(t_j,\phi_j)\right)\right).\]
\end{lemma}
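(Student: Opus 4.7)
The plan is to rewrite $T_n := \sum_{i=1}^N \theta_i \xi_n(t_i,\phi_i)$ as a sum of independent summands indexed by starting sites and then invoke the Lindeberg--Feller CLT. Setting
\[G_{m,j}^n := \sum_{i=1}^N \theta_i\l[\phi_i\l(\tfrac{X_{m,j}(nt_i)-[n\v t_i]}{\sqrt n}\r)-\phi_i\l(\tfrac{m}{\sqrt n}\r)\r], \qquad S_m^n := \sum_{j=1}^{\eta_0(m)} G_{m,j}^n,\]
gives $T_n = n^{-d/4}\sum_{m\in\Zd} S_m^n$, and the family $\{S_m^n\}_{m\in\Zd}$ is independent by Assumptions \ref{configuration} and \ref{rw-assmpn}. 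The mean vanishes identically: for each realization of the walk, $X(nt_i) - [n\v t_i]$ is an integer vector, so summing the shifted $\phi_i(\,\cdot\,/\sqrt n)$ over $\Zd$ gives the same value as summing the unshifted one; after interchanging the expectations and the sum (which is justified by the Schwartz decay of the $\phi_i$), one obtains $E[T_n]=0$ exactly.

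Next, I would compute $\Var(T_n)$ via the tower property. Since $\eta_0(m)$ is independent of the walkers,
\[\Var(S_m^n) = \rho_0\,\Var_X(G_m^n) + v_0\,(E_X G_m^n)^2,\]
so $\Var(T_n)$ splits into two Riemann-type sums weighted by $n^{-d/2}$. The invariance principle $(X(nt_i)-[n\v t_i])/\sqrt n \Rightarrow \k B(t_i)$ jointly in $i$, combined with uniform moment control from Assumption \ref{rw-moment}, yields via dominated convergence
\[\Var(T_n) \longrightarrow \rho_0\!\int_{\mathbb R^d}\!\Var\l[\textstyle\sum_i \theta_i \phi_i(\k B(t_i)+x)\r] dx \;+\; v_0\!\int_{\mathbb R^d}\!\l(\textstyle\sum_i\theta_i(T_{t_i}\phi_i(x) - \phi_i(x))\r)^2 dx.\]
Expanding the squares and applying Fubini, the semigroup identity $T_sT_t=T_{s+t}$, and the symmetry $p_t(-x)=p_t(x)$ identifies this limit with $\sum_{i,j}\theta_i\theta_j[\rho_0\sigma_1((t_i,\phi_i),(t_j,\phi_j)) + v_0\sigma_2((t_i,\phi_i),(t_j,\phi_j))] = \sigma^2$.

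For the Lindeberg condition, the bound $|G_{m,j}^n| \leq K := 2\sum_i|\theta_i|\,\|\phi_i\|_\infty$ forces $|n^{-d/4} S_m^n| > \epsilon$ to imply $\eta_0(m) > \epsilon n^{d/4}/K$, which has probability exponentially small in $n^{d/4}$ by Assumption \ref{configuration}. Coupled with an estimate of the form $E[(G_m^n)^2] \leq C_N(1+|m|/\sqrt n)^{-2N}$ for arbitrary $N$ --- obtained from the Schwartz decay of the $\phi_i$ and exponential tail control of $(X(nt_i)-[n\v t_i])/\sqrt n$ afforded by Assumption \ref{rw-moment} --- a routine truncation argument shows $\sum_m E[n^{-d/2}(S_m^n)^2\mathbf{1}_{\{|n^{-d/4}S_m^n| > \epsilon\}}] \to 0$. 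Lindeberg--Feller then delivers the stated mean-zero Gaussian limit with variance $\sigma^2$.

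The main obstacle I anticipate is the variance identification: the discrete sum over $m\in\Zd$ must be shown to converge to the claimed Riemann integral uniformly in the shift parameter $x=m/\sqrt n$, which requires upgrading the distributional invariance principle to convergence of expectations of Schwartz functions in a form quantitative enough that the error is summable after $n^{-d/2}$ weighting. Assumption \ref{rw-moment} supplies the uniform tail bound on the walker needed for dominated convergence, and the Schwartz decay of the $\phi_i$ controls the spatial tails, so once the pointwise convergence $E_X G_m^n \to \sum_i\theta_i(T_{t_i}\phi_i(x)-\phi_i(x))$ and its variance analogue are established, the remainder reduces to a routine Riemann-sum estimate.
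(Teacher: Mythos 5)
Your proposal is correct and follows essentially the same route as the paper: decompose by starting site into independent compound sums, apply Lindeberg--Feller, identify the limiting variance through the conditional (compound-sum) variance formula, and control distant sites by combining Schwartz decay of $\phi_i$ with Markov/moment bounds on the centered walk. The only cosmetic difference is that the paper truncates explicitly at radius $r(n)\sqrt{n}$ (deferring the Lindeberg--Feller step to Lemma 2 of \cite{kumar}) and disposes of the tail in $L^2$, whereas you treat the whole infinite array at once with the spatial decay estimate absorbing the tail.
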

\begin{proof}
 Define \[U_m(t,\phi):=n^{-d/4}\sum_{j=1}^{\eta_0(m)}\left[\phi\l(\Xt\r)-\phi\l(\tfrac{m}{\sqrt{n}}\r)\right].\]
 Let $W_m=\sum_{i=1}^N\theta_iU_m(t_i,\phi_i).$
 Denote $\bar{U}_m(t,\phi)=U_m(t,\phi)-EU_m(t,\phi)$ and $\bar{W}_m=W_m-EW_m.$
 Choose $r(n)$ so that $r(n)\to\infty$ slowly enough that 
 \begin{equation}\label{r(n)}
 (r(n))^d\cdot E[\eta_0(x)^21\{\eta_0(x)\geq n^{d/8}\}]\to 0\text{ as }n\to\infty.\end{equation}
 Since $\{\eta_0(m):m\in\Zd\}$ are i.i.d., a simple calculation shows that $E\eta_t(m)=\rho_0$, $\forall t>0, m\in\Zd$ and hence $E\xi_n(t,\phi)=0$.
Rewrite \[\sum_{i=1}^N\theta_i\xi_n(t_i,\phi_i)=\sum_{i=1}^N\theta_i\bar{\xi}_n(t_i,\phi_i)=\sum_{m\in\mathbb Z^d}\bar{W}_m.\] This sum can be split into the following two sums:
\begin{equation}
\label{two_sums}
\sum_{i=1}^N\theta_i\xi_n(t_i,\phi_i)=\sum_{|m|\leq r(n)\sqrt{n}}\bar{W}_m+\sum_{|m|>r(n)\sqrt{n}}\bar{W}_m.\end{equation}
We can now apply the Lindeberg-Feller theorem to the sum $\sum_{|m|\leq r(n)\sqrt{n}}\bar{W}_m$ and show that it converges in distribution  to the mean zero Gaussian random variable with variance indicated in Lemma \ref{finite-dim}. The proof follows in the same vein as the proof of Lemma 2 in \cite{kumar} and we skip it.

We next  show that the second sum in \eqref{two_sums} converges to $0$ in $L^2$ as $n\to \infty$. The proof of this differs from the analogous step in \cite{kumar} and so we give the proof in detail. In \cite{kumar} we use large deviations to control contributions to the current from distant particles. This is not enough in the present situation and we need to appeal to the rapidly decreasing property of Schwartz functions to bound these contributions.

By Schwarz inequality,
\[E\l(\sum_{|m|>r(n)\sqrt{n}}\bar{W}_m\r)^2\leq ||\theta||^2\sum_{i=1}^N\sum_{|m|> r(n)\sqrt{n}}E\bar{U}_m(t_i,\phi_i)^2\]
where $||\theta||^2=\sum_{i=1}^N \theta_i^2$.
It suffices to show that for a fixed $t>0$ and $\phi\in \mathcal S(\mathbb R^d)$, \[\sum_{|m|> r(n)\sqrt{n}}E\bar{U}_m(t,\phi)^2\to 0\] as $n\to\infty$.

\begin{align}\label{L2conv}
&\sum_{|m|> r(n)\sqrt{n}}E\bar{U}_m(t,\phi)^2\nonumber\\
&\leq cn^{-d/2}\sum_{|m|> r(n)\sqrt{n}}E\l[\sum_{j=1}^{\eta_0(m)}\phi(\Xt)-\rho_0E\phi(\Xtj)\r]^2\nonumber\\
&\qquad+cn^{-d/2}\sum_{|m|> r(n)\sqrt{n}}\phi^2(\tfrac{m}{\sqrt{n}})\nonumber\intertext{since $(a+b)^2\leq 2a^2+2b^2$ and $\eta_0(\cdot)$ have bounded moments by Assumption \ref{configuration} }
&\leq cn^{-d/2}\l[\sum_{|m|> r(n)\sqrt{n}}E\phi^2(\Xtj)+\sum_{|m|> r(n)\sqrt{n}}\phi^2(\tfrac{m}{\sqrt{n}})\r]\nonumber
\intertext{since $\eta_0(\cdot)$ have bounded second moments and are independent of the random walks} \nonumber
&=cn^{-d/2}\sum_{|m|> r(n)\sqrt{n}}\biggl[E\l(\phi^2(\Xtj){\bf 1}\{X_{m,1}(nt)\in B_{M(n)\sqrt{n}}+[n\vec{v}t]\}\r)\nonumber\\
&+E\l(\phi^2(\Xtj){\bf 1}\{X_{m,1}(nt)\notin B_{M(n)\sqrt{n}}+[n\vec{v}t]\}\r)+\phi^2(\tfrac{m}{\sqrt{n}})\biggr]
\intertext{where $M(n)$ is chosen so that $M(n)\to\infty$ as $n\to\infty$ and $M(n)=o(r(n))$.}\nonumber
&=cn^{-d/2}\l(I_1+I_2+I_3\r)
\end{align}
where $I_1,I_2,I_3$ denote the sums  of the three terms resp. in the expression above.
We show below that the scaled limits of each of these sums goes to $0$ as $n\to\infty$.

\begin{align*}
n^{-d/2}I_1&\leq cn^{-d/2}\sum_{|m|> r(n)\sqrt{n}}P(X(nt)\in B_{M(n)\sqrt{n}}+[n\vec{v}t]-m)\intertext{since $\phi$ is bounded}\\
&=cn^{-d/2}\sum_{i\in B_{M(n)\sqrt{n}}}P(X(nt)-[n\vec{v}t]\notin B_{r(n)\sqrt{n}}+i )\\
&\leq cn^{-d/2}\sum_{i\in B_{M(n)\sqrt{n}}}P(X(nt)-[n\vec{v}t]\notin B_{\frac{1}{2}r(n)\sqrt{n}} )
\intertext{as $\frac{1}{2}r(n)\sqrt{n}\leq r(n)\sqrt{n}-M(n)\sqrt{n}$ (recall that $M(n)=o(r(n)$)}
&=cn^{-d/2}\sum_{i\in B_{M(n)\sqrt{n}}}P(|X(nt)-[n\vec{v}t]|> \frac{1}{2}r(n)\sqrt{n})\\
&\leq cn^{-d/2}M(n)^dn^{d/2}\frac{E|X(nt)-[n\vec{v}t]|^r}{(r(n)\sqrt{n})^r}\intertext{by Markov inequality, $r\geq 0$}\\
&=cn^{r/2}n^{-r/2}M(n)^d(r(n))^{-r}\intertext{(as $E|X(nt)-[n\vec{v}t]|^{r}$ is O($n^{r/2}$))}
&\to 0 
\end{align*}
as $n\to\infty$ by taking $r>d$.

The following requires the rapidly decreasing property of Schwartz functions.
\begin{align*}
n^{-d/2}I_2
&\leq n^{-d/2}\sum_{m\in B^c_{r(n)\sqrt{n}} }\sum_{L\geq M(n)}E\Bigl(\phi^2(\Xtj)\\
&\qquad\qquad\qquad\qquad\qquad\qquad\times {\bf 1}\{X_{m,1}(nt)\in B^c_{L\sqrt{n}}\cap B_{(L+1)\sqrt{n}}+[n\vec{v}t]\}\Bigr)\\
&\leq n^{-d/2}\sum_{m\in B^c_{r(n)\sqrt{n}} }\sum_{L\geq M(n)}\frac{c_j}{L^{2j}}P(X(nt)-[n\vec{v}t]\in B^c_{L\sqrt{n}}\cap B_{(L+1)\sqrt{n}}-m)\intertext{since $\phi\in\mathcal S(\mathbb R^d)$, $|\phi(x)|\leq c_jx^{-j}$ for any $j\in\mathbb N$}
&\leq n^{-d/2}\sum_{m\in B^c_{r(n)\sqrt{n}} }\sum_{L\geq M(n)}\sum_{i\in B^c_{L\sqrt{n}}\cap B_{(L+1)\sqrt{n}}}\frac{c_j}{L^{2j}}P(X(nt)-[n\vec{v}t]=i -m)\\
&\leq n^{-d/2}\sum_{L\geq M(n)}\frac{c_j}{L^{2j}}\l|B^c_{L\sqrt{n}}\cap B_{(L+1)\sqrt{n}}\r| \intertext{by summing over $m$ first and then $i$}
& \leq n^{-d/2}\sum_{L\geq M(n)}\frac{c_j}{L^{2j}} (L\sqrt{n})^{d-1}\sqrt{n}= \sum_{L\geq M(n)}\frac{c_j}{L^{2j-d+1}} \to 0 
\end{align*}
as $n\to\infty$ by choosing $2j>d$ and since $M(n)\to \infty$ as $n\to\infty$.
And finally,
\begin{align*}
n^{-d/2}I_3\leq c\int_{|x|>r(n)} \phi^2(x)dx\to 0\end{align*}
as $n\to\infty$ by a Riemann sum argument and since $r(n)\to \infty$ as $n\to\infty$.

Thus the right hand side  of \eqref{L2conv} goes to $0$ as $n\to\infty$ and since
\[E\l(\sum_{|m|>r(n)\sqrt{n}}\bar{W}_m\r)^2\leq ||\theta||^2\sum_{i=1}^N\sum_{|m|> r(n)\sqrt{n}}E\bar{U}_m(t_i,\phi_i)^2,\]
we conclude that the second sum in \eqref{two_sums} goes to $0$ in $L^2$. This 
proves Lemma \ref{finite-dim}.
\end{proof}

\section{Tightness and Completion of the Proof} 
\label{sec:tightness}
Choose $\alpha>0$ and 
\begin{equation}\label{beta}
\beta\geq d/4+d\alpha+1.
\end{equation}
 Fix $\phi\in\S$.  We prove 
\begin{prop}\label{process-tight}
 $\xi_n(\cdot,\phi)$ is tight in  $D([0,T],\mathbb R)$.
 \end{prop}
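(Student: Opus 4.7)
The plan is to establish tightness via the Aldous-type modulus of continuity criterion from \cite{Durr}. Since $\xi_n(\cdot,\phi)$ is a sum of contributions from individual random walks, the natural approach combines a spatial truncation (at scale $n^{1/2+\alpha}$), a time discretization (on the grid of mesh $n^{-\beta}$), and moment estimates on a suitably chosen ``bad event'' $G_{m,j}^{[a,b)}$ encoding that the random walk $X_{m,j}$ undergoes a large excursion during the time window $[a,b)$.

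First, I would truncate spatially. Using the rapid decay of $\phi \in \mathcal S(\mathbb R^d)$, essentially the same Schwartz-decay computation as the $I_2$-bound in Lemma \ref{finite-dim} shows that the contribution from particles with $|m| > n^{1/2+\alpha}$ to any increment $\xi_n(t,\phi)-\xi_n(s,\phi)$ is negligible as $n\to\infty$. This reduces the problem to controlling the truncated sum $n^{-d/4}\sma\seta[\pst]$. Next, on the time grid $t_k = kn^{-\beta}$, $0\leq k \leq [Tn^\beta]$, I would reduce tightness to bounding $P\bigl(\max_k\supk|\xi_n(t,\phi)-\xi_n(t_k,\phi)| > \epsilon\bigr)$. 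The key observation is that, on the complement of $G_{m,j}^{[kn^{1-\beta},(k+1)n^{1-\beta})}$, the random walk $X_{m,j}$ stays close enough to its position at time $nt_k$ that $\phi(\Xt)-\phi(\tfrac{X_{m,j}(nt_k)-[n\v t_k]}{\sqrt{n}})$ is negligible on the $n^{-d/4}$ scale, so only particles with a bad event contribute meaningfully to the oscillation.

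The core estimate is then an expected-count bound on $\smG$ for a single window. Since the number of relevant lattice sites is $O(n^{d/2+d\alpha})$ and $E\eta_0(m)$ is bounded (Assumption \ref{configuration}), the mean of $\smG$ is proportional to $n^{d/2+d\alpha}\cdot P(G^{[0,\nb)})$; the exponential moment Assumption \ref{rw-moment} together with the short window length $\nb$ drives $P(G^{[0,\nb)})$ to zero. After a union bound over the $O(n^\beta)$ windows and incorporation of the $n^{-d/4}$ prefactor, the condition $\beta \geq d/4+d\alpha+1$ in \eqref{beta} is precisely what forces the resulting total to vanish. Combined with the finite-dimensional convergence from Lemma \ref{finite-dim} (which yields tightness at fixed $t$), this delivers tightness of $\xi_n(\cdot,\phi)$ in $D([0,T],\mathbb R)$.

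The hard part will be calibrating the bad event $G_{m,j}^{[a,b)}$ sharply enough. It must be rare enough on windows of length $\nb$ to survive the triple combinatorial load of the spatial count $n^{d/2+d\alpha}$, the temporal union bound $n^{\beta}$, and the normalization mismatch with $n^{-d/4}$, while simultaneously ensuring that on its complement the local oscillation of $\phi(\Xt)$ over a window of duration $n^{-\beta}$ is of smaller order than $n^{-d/4}$. This balancing act is the technical heart of the proof, pinpoints the constraint \eqref{beta}, and is precisely where the multi-dimensional analysis departs meaningfully from the one-dimensional arguments in \cite{Timo-IRW} and \cite{kumar}.
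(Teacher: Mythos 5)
There is a genuine gap: the tightness criterion of Proposition 5.7 in \cite{Durr} that the paper (and implicitly your plan) relies on has \emph{two} conditions, and your proposal only addresses one of them. You control the modulus of continuity at the vanishing scale $n^{-\beta}$ (via spatial truncation, the time grid, and the jump-count bad events), and then assert that this, ``combined with the finite-dimensional convergence from Lemma \ref{finite-dim} (which yields tightness at fixed $t$),'' delivers tightness in $D([0,T],\mathbb R)$. That inference fails: fixed-time tightness plus oscillation control on windows of length $n^{-\beta}\to 0$ does not control the oscillation over a fixed window of length $\delta$, which requires chaining across $\sim\delta n^{\beta}$ small windows and hence a quantitative increment estimate at scales $t-s\geq n^{-\beta}$. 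This is exactly the paper's first tightness condition, Lemma \ref{tight-criterion-1}: $E\l|\bar{\xi}_n(t,\phi)-\bar{\xi}_n(s,\phi)\r|^{2r}\leq C_r(t-s)^\sigma$ with $\sigma>1$ for $t-s\geq n^{-\beta}$, proved through the moment bounds of Lemmas \ref{moment_bound_A} and \ref{A-bound} (mean value theorem, Schwartz decay, and a CLT-order bound giving the factor $n^{d/2}[\sqrt{t-s}+n^{-1/2}]$). Your proposal contains no substitute for this estimate, so the argument as written does not yield tightness.

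Two further points in the part you do address. First, your spatial truncation invokes the fixed-time $I_2$ computation from Lemma \ref{finite-dim}, but inside a supremum over $t$ in a window this is not enough: a particle starting outside $B_{n^{1/2+\alpha}}$ may come near the support of $\phi$ at \emph{some} time, and controlling that requires a maximal inequality over the whole time interval, which is what the paper's Lemma \ref{lemman1} (Doob's inequality, together with the dichotomy at the intermediate radius $n^{1/2+\gamma}$ and Schwartz decay for walks that never enter) supplies. Second, your claim that on the complement of $G_{m,j}$ the oscillation of $\phi(\Xt)$ over the window is ``negligible'' overlooks the moving centering $[n\vec{v}t]$: over a window of macroscopic length $n^{-\beta}$ the centering shifts by order $n^{1-\beta}$ lattice sites even if the particle never jumps. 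The paper removes this issue structurally by refining the covering $\mathcal I_n$ so that $[n\vec{v}t]$ is constant on each subinterval, making the non-jumping contribution exactly zero; without that refinement (or an extra estimate for the centering drift summed over the $O(n^{d/2+d\alpha})$ truncated particles), your calibration of the bad event does not close.
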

 To prove Proposition \ref{process-tight} we use the tightness criteria in \cite{Durr} .  
 In sections \ref{subsec:T1} and \ref{subsec:T2} below, we check the two tightness conditions of Proposition 5.7 in \cite{Durr} for the sequence of processes $\{\xi_n(\cdot,\phi)\}$.  
 The first condition is a moment bound condition and the second involves the modulus of continuity.
\subsection{Verifying the first tightness condition}\label{subsec:T1}
Let $s,t\in[0,T]$ and  without loss of generality we assume $t>s$ below. Let $\bar{\xi}$ denote the centered current process.
Choose an integer \begin{equation}\label{r}r>2\beta>2.\end{equation} We show that
\begin{lemma}\label{tight-criterion-1}When $(t-s)\geq n^{-\beta}$,
\begin{equation}\label{tight-1}E\l|\bar{\xi}_n(t,\phi)-\bar{\xi}_n(s,\phi)\r|^{2r}\leq C_r (t-s)^\sigma \end{equation}where $\sigma>1$, and $C_r$ is a constant depending only on $r$.
\end{lemma}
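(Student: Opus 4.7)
The plan is to exploit independence across starting sites. Writing
$$V_m := n^{-d/4}\sum_{j=1}^{\eta_0(m)}[\phi(\Xt) - \phi(\Xs)], \qquad \bar V_m := V_m - EV_m,$$
we have $\bar\xi_n(t,\phi) - \bar\xi_n(s,\phi) = \sum_{m\in\Zd}\bar V_m$, and by Assumptions \ref{configuration} and \ref{rw-assmpn} the $\bar V_m$ are mutually independent mean-zero variables. Rosenthal's inequality for sums of independent centered variables gives
$$E\Bigl|\sum_m \bar V_m\Bigr|^{2r}\le C_r\Biggl[\Bigl(\sum_m E\bar V_m^2\Bigr)^r + \sum_m E|\bar V_m|^{2r}\Biggr],$$
so the task reduces to two single-site moment estimates.

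For the variance piece, conditioning on $\eta_0(m)$ and using independence of the walks from $\eta_0$ gives $\Var(V_m) = n^{-d/2}[\rho_0\Var(Y_m) + v_0(EY_m)^2]$ where $Y_m := \phi(\Xt)-\phi(\Xs)$. A first-order Taylor expansion $\phi(\Xt)-\phi(\Xs) = \nabla\phi(\Xs)\cdot(\Xt-\Xs) + O(|\Xt-\Xs|^2)$ combined with the random-walk moment bound $E|\Xt-\Xs|^{2k}\le C_k(t-s)^k$ (which uses Assumption \ref{rw-moment}) gives $E Y_m^2 \le C(t-s)\,E|\nabla\phi(\Xs)|^2 + C(t-s)^2$. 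After the $n^{-d/2}$ prefactor, summing over $m$ and comparing to a Riemann integral produces $\sum_m \Var(V_m) \le C(t-s)$, so the first Rosenthal term is bounded by $C_r(t-s)^r$.

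For the sum-of-$2r$-th-moments piece, applying the power-mean inequality to the inner sum over $j$ and using independence of $\eta_0(m)$ from the walks yields
$$E|\bar V_m|^{2r}\le C\,n^{-dr/2}\,E[\eta_0(m)^{2r}]\,E|Y_m|^{2r},$$
and $E[\eta_0(m)^{2r}]$ is finite by the exponential-moment Assumption \ref{configuration}. Writing $\Xt = m/\sqrt n + Z_t$ with $Z_t := n^{-1/2}(X(nt)-[n\v t])$, the mean-value theorem together with Cauchy-Schwarz gives $E|Y_m|^{2r} \le C(t-s)^r \bigl(E\sup_{\xi\in[\Xs,\Xt]}|\nabla\phi(\xi)|^{4r}\bigr)^{1/2}$. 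The elementary inequality $1+|m/\sqrt n| \le (1+|W|)(1+|m/\sqrt n + W|)$ applied with $W = Z_s + u(Z_t-Z_s)$, together with the Schwartz decay $|\nabla\phi(x)|\le C_N(1+|x|)^{-N}$ and exponential tail bounds on $Z_t, Z_s$ from Assumption \ref{rw-moment}, yields $E|Y_m|^{2r}\le C_{r,N}(t-s)^r (1+|m|/\sqrt n)^{-K}$ for any prescribed $K > 0$. Choosing $K > d$ and Riemann-summing gives $\sum_m E|\bar V_m|^{2r} \le C n^{-dr/2}(t-s)^r n^{d/2} = C n^{d(1-r)/2}(t-s)^r \le C(t-s)^r$ for $r\ge 1$, $d\ge 1$.

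Combining the two pieces yields $E|\bar\xi_n(t,\phi)-\bar\xi_n(s,\phi)|^{2r}\le C_r(t-s)^r$, which is \eqref{tight-1} with $\sigma = r > 2$ by \eqref{r}. The main technical obstacle is extracting sufficient decay in $m$ in the second estimate: one must combine the Schwartz decay of $\nabla\phi$ with exponential-moment tail bounds on the random walks so that the sum $\sum_m$ converges uniformly in $n$, which is precisely where the multidimensional Schwartz-valued framework differs from the $d=1$ real-valued argument in \cite{kumar}. The constraint $(t-s)\ge n^{-\beta}$ plays no role in these moment estimates and merely demarcates the regime in which this first half of the tightness criterion is verified.
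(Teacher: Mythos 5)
Your overall architecture --- independence of the site contributions, a Rosenthal-type moment inequality, then single-site estimates via the mean value theorem, Schwartz decay and Riemann summation --- is essentially the paper's (Lemma \ref{moment_bound_A} plays the role of your single-site bounds, and Lemma \ref{A-bound}, borrowed from Lemma 8 of \cite{kumar}, is a hand-rolled Rosenthal inequality). The gap is in your random-walk increment bound and in the conclusion you draw from it. The estimate $E\bigl|\tfrac{X_{m,1}(nt)-[n\vec{v}t]}{\sqrt{n}}-\tfrac{X_{m,1}(ns)-[n\vec{v}s]}{\sqrt{n}}\bigr|^{2k}\leq C_k(t-s)^k$ is false when $n(t-s)$ is small: the increment $X(nt)-X(ns)$ is compound Poisson with mean number of jumps $n(t-s)$, so its centered $2k$-th moment is of order $n(t-s)+(n(t-s))^k$, and moreover the centering actually used, $[n\vec{v}t]-[n\vec{v}s]$, differs from the true mean $n\vec{v}(t-s)$ by $O(1)$. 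Dividing by $n^{k}$ gives $C_k\bigl[(t-s)^k+n^{1-k}(t-s)+n^{-k}\bigr]$, and the extra terms dominate $(t-s)^k$ exactly when $t-s<n^{-1}$, a regime permitted by the hypothesis since $\beta>1$. The same lattice-rounding effect contaminates your variance step: $\sum_m\Var(V_m)\leq C[(t-s)+n^{-1}]$, not $C(t-s)$. This is why the paper's Lemma \ref{moment_bound_A} carries the additive $n^{-1/2}$ term and the final bound carries $n^{-r/2}+n^{-rd/2}$.

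Consequently your concluding claims --- the uniform bound $C_r(t-s)^r$ and the remark that the restriction $(t-s)\geq n^{-\beta}$ ``plays no role'' --- are wrong, and not merely technically: no bound of the form $C(t-s)^\sigma$ with $\sigma>0$ can hold uniformly in $s<t$. Indeed, take $\vec{v}\neq 0$ and $t-s$ arbitrarily small but positioned so that $[n\vec{v}t]\neq[n\vec{v}s]$; then with high probability no particle jumps in $[ns,nt]$, yet every particle's argument of $\phi$ shifts by an $O(n^{-1/2})$ lattice vector, and the fluctuations of $\eta_0$ alone already give $E|\xi_n(t,\phi)-\xi_n(s,\phi)|^2$ of order $n^{-1}$, which does not vanish as $t-s\to0$. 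This is precisely why the lemma is stated only for $t-s\geq n^{-\beta}$, and why the paper converts the $n$-dependent error terms at the very last step, writing $n^{-r/2}\leq(n^{-\beta})^{r/(2\beta)}\leq(t-s)^{r/(2\beta)}$ (and similarly for $n^{-rd/2}$) with exponent exceeding $1$ thanks to the choice $r>2\beta$ in \eqref{r}. Your argument becomes correct if you carry the $n^{-1}$-type corrections through Rosenthal's inequality and perform this same absorption; as written, the step that discards them is a genuine error. (Minor, fixable points: the Taylor remainder needs the Schwartz decay of the second derivatives for summability in $m$, and your exponential tail bounds on $Z_t,Z_s$ rest on Assumption \ref{rw-moment}; these are details, unlike the issue above.)
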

 \begin{proof}
 Let 
\[ A_m:=\sum_{j=1}^{\eta_0(m)}\left(\phi(\Xt)-\phi(\Xs)\right).\]  We can write 
\[
\xi_n(t,\phi)-\xi_n(s,\phi)=n^{-d/4}\sum_{m\in\Zd}A_m.\]
Then
\begin{equation}\label{T1-bound}E|\bar{\xi}_n(t,\phi)-\bar{\xi}_n(s,\phi)|^{2r}=n^{-rd/2}E\left(\sum_{m\in\Zd}\bar{A}_m\right)^{2r}\end{equation}
where $\bar{A}_m=A_m-EA_m$.

To appropriately bound \eqref{T1-bound}, we first find the following moment bound.
\begin{lemma}
\label{moment_bound_A}
For any integer $1\leq k\leq 2r$, there exists a constant $C$ that depends on $r$, such that
\[\sm E\left|\bar{A}_m\right|^k\leq  C(n^{-1/2}+\sqrt{t-s}\ )n^{d/2}.\]
\end{lemma}
\begin{proof}
\begin{align}\label{A}
\sm E\left|\bar{A}_m\right|^k
&\leq 2^k\sm\l[E\l|A_m\right|^k\r]\nonumber\intertext{by applying $(a+b)^k\leq 2^k(|a|^k+|b|^k)$ and then H\"older's inequality}\nonumber
&\leq 2^k\sm\l[E(\eta_0(m))^kE\left|\pstj\right|^k\r]\nonumber\intertext{by Jensen's inequality and independence of $\eta_0(\cdot)$ from the random walks}
&\leq c2^{2r}\l[\sm E\left|\pstj\right|^k\r]
\end{align}
since $\eta_0(m)$ has bounded moments.

Define \[D\phi(x):=\l(\partial_1\phi(x),\hdots,\partial_d\phi(x)\r),\] where $\partial_i$ denotes the partial derivative with respect to the $i$th co-ordinate of $x$.
Let $\psi(x):=(1+|x|)^{-N}$ for some positive integer $N$.

\begin{align*}
\sm E&\left|\pstj\right|^k\\
&\leq c_N\sm \Biggl[E\biggl[\l(\psi(\Xsj)\r)^k\l|\Xtj-\Xsj\r|^k\\
&\qquad\qquad\times {\bf 1}\{|\Xtj|\geq |\Xsj|\}\biggr]\\
&+E\biggl[\l(\psi(\Xtj)\r)^k\l|\Xtj-\Xsj\r|^k\\
&\qquad\qquad\times{\bf 1}\{|\Xsj|> |\Xtj|\}\biggr] \Biggr]  
\end{align*}
by applying the mean value theorem and $|D\phi(x)|\leq c_N\psi(x)$.

We will treat one of the terms in the above inequality, the other being similar. 
\begin{align*}
c_N&\sm \Biggl[E\biggl[\l(\psi\l(\Xsj\r)\r)^k\l|\Xtj-\Xsj\r|^k\\
&\qquad\qquad\qquad\qquad\times{\bf 1}\l\{\l|\Xtj\r|\geq \l|\Xsj\r|\r\}\biggr]\\
&\leq c_N\sm\sum_{L\geq 0} E\Biggl[\frac{1}{(1+L)^{kN}} \l|\Xtj-\Xsj\r|^k\\
&\qquad\qquad\qquad\qquad\times{\bf 1}\l\{\Xsj\in B_{L}^c\cap B_{(L+1)}\r\}\Biggr]\\
&\leq c_N\sm\sum_{L\geq 0}\frac{1}{(1+L)^{kN}}E\Biggl[\l|\Xts\r|^k\\
&\qquad\qquad\qquad\qquad\times {\bf 1}\l\{X(ns)\in B_{L\sqrt{n}}^c\cap B_{(L+1)\sqrt{n}}+[n\vec{v}s]-m\r\}\Biggr] \\
&\leq c_N\sum_{L\geq 0}\frac{1}{(1+L)^{kN}}((L+1)\sqrt{n})^{d-1}\sqrt{n}E\l|\Xts\r|^k  \intertext{by summing over $m$ }
&\leq c_N\l[\sum_{L\geq 0}(1+L)^{-kN+d-1}\r]n^{d/2}[(t-s)^{k/2}+n^{-k/2}]\intertext{
using central limit theorem}
&\leq cn^{d/2}[\sqrt{t-s}+n^{-1/2}] \end{align*}by choosing $N$ large enough so that the sum in parenthesis is finite and by bounding $(t-s)^{k/2}\leq (2T)^{\frac{k-1}{2}}\sqrt{t-s}$ and $n^{-k/2}\leq n^{-1/2}$.
\end{proof}

\begin{lemma}
\label{A-bound}There exists a constant $C$ depending on $r$ such that
\begin{equation}
E\left(\sm\bar{A}_m\right)^{2r}\leq C\{1+(n^{-r/2}+(t-s)^{r/2}\ )n^{rd/2}\}
\end{equation}
\end{lemma}
The proof of the above lemma uses Lemma \ref{moment_bound_A} and is along the same lines as Lemma 8 in \cite{kumar}. (In fact Lemma 8 in \cite{kumar} is a special case where $r$ is $6$.) We omit the proof.  The reader is referred to Lemma 8 in \cite{kumar} for the idea of the proof.

By \eqref{T1-bound} and the above lemma we get, when $|t-s|\geq n^{-\beta}$,
\begin{align*}
E\l|\bar{\xi}_n(t,\phi)-\bar{\xi}_n(s,\phi)\r|^{2r}
&\leq c\l[n^{-rd/2}+n^{-r/2}+(t-s)^{r/2}\r]\\
&\leq c\l[(n^{-\beta})^{\frac{rd}{2\beta}}+(n^{-\beta})^{\frac{r}{2\beta}}+(t-s)^{r/2}\r]\\
& \leq c (t-s)^\sigma
\end{align*}
 where $\sigma>1$ by our choice of $r>2\beta>2$ in \eqref{r} and where $c$ is a constant depending on $r$.
\end{proof}

\subsection{Verifying the second tightness condition}\label{subsec:T2}
The second tightness condition involves proving that
 \begin{equation}\label{modulus}\lim_{n\to\infty}P(w_{\xi_n}(n^{-\beta})>\epsilon)=0,\end{equation} where 
 \[w_{\xi_n}(n^{-\beta}):=\sup_{|t-s|<n^{-\beta}}|\xi_n(t,\phi)-\xi_n(s,\phi)|\]
is the modulus of continuity. 

Recall the definition of the step function $[\cdot]$ in \eqref{integer-part}.  For each $n$, we will divide $[0,T]$ into subintervals such that $[n\vec{v}t]$ is constant in each subinterval and each subinterval has length less than $n^{-\beta}$. 
We construct a family $\mathcal I_n$ of left-closed right-open subintervals by following the steps below.\\
Step 1: Set $a=0$.\\
Step 2: Let $b_1$ be the largest real number greater than $a$ such that $[n\v t]$ is constant as $t$ varies over interval $[a,b_1)$.\\
Step 3: Set $b=\min\{b_1,a+n^{-\beta}\}$.\\
Step 4: Include $[a,b)$ in $\mathcal I_n$.\\
Step 5: If $b>T$ then stop, otherwise set $a=b$ and repeat from Step 2.\\

Clearly $\mathcal I_n$ forms a minimal covering of $[0,T]$ that  satisfies  the following two conditions:\\
\begin{equation}
\label{subintervals}
\begin{split}
& 1)\   \forall [a,b)\in\mathcal I_n, |b-a|<n^{-\beta}\text{ and }[n\vec{v}t]\text{ is constant in }[a,b),\\
& 2)\   \text{the subintervals in }\mathcal I_n\text{ are disjoint.}
\end{split}
\end{equation}
A little thought  gives us
 \[|\mathcal I_n|\leq d [Tn^\beta+1]\text{ for large }n\] (i.e. number of such subintervals is at most $d [Tn^\beta+1]$ where $d$ is the dimension).

 To verify the second tightness condition, it is sufficient to show
  \begin{lemma} \label{tight-2}
For any $0<T<\infty$ and $\epsilon>0$, 
\begin{equation} 
\label{2}   
\lim_{n\to\infty}\sum_{[a,b)\in\mathcal I_n}P(\sup_{t\in[a,b)}|\xi_n(t,\phi)-\xi_n(a,\phi)|>\epsilon)=0.
\end{equation}   
\end{lemma}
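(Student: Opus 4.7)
The plan is to prove that each summand in \eqref{2} is $o(n^{-\beta})$ uniformly in $[a,b)\in\mathcal I_n$; since $|\mathcal I_n|=O(n^\beta)$, this forces the full sum to zero. The main tool is Markov's inequality at the even moment $2r$ (with $r>2\beta$ as in \eqref{r}), combined with a supremum-moment refinement of Lemma \ref{moment_bound_A}.

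The crucial structural simplification is that on each $[a,b)\in\mathcal I_n$ the discretized drift $[n\vec{v}t]$ is frozen at $[n\vec{v}a]$, so
\[\xi_n(t,\phi)-\xi_n(a,\phi)=n^{-d/4}\sum_{m\in\mathbb Z^d}\sum_{j=1}^{\eta_0(m)}\bigl[\phi(Y_{m,j}(t))-\phi(Y_{m,j}(a))\bigr],\qquad Y_{m,j}(t):=\tfrac{X_{m,j}(nt)-[n\vec{v}a]}{\sqrt n}.\]
Each path $t\mapsto X_{m,j}(nt)$ is piecewise constant with countably many jumps in $[na,nb)$, so the supremum over $t\in[a,b)$ is attained on an a.s.\ finite set of jump times and may be treated as a discrete maximum. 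I would then split the index set into a near part $|m-[n\vec{v}a]|\leq n^{1/2+\alpha}$ and a far part; for the far part, Assumption \ref{rw-moment} together with an exponential-Markov estimate for $\sup_{t\in[a,b)}|X_{m,j}(nt)-m|$ shows that the probability any far walk enters the effective support of $\phi$ inside the short window $[na,nb)$ of length $\leq n^{1-\beta}$ is $o(n^{-\beta})$, after summing over $m$ using the rapid decay of $\phi$ as in Section \ref{sec:finite-dim}.

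For the near-particle part I would first discretize $[a,b)$ on a fine deterministic grid $a=t_0<t_1<\cdots<t_K=b$ with spacing $h\ll n^{-\beta}$, controlling the maximum over grid points by Markov at exponent $2r$ together with the pointwise moment bound of Lemma \ref{moment_bound_A} (which gives $E|\bar\xi_n(t,\phi)-\bar\xi_n(a,\phi)|^{2r}\leq c(n^{-r/2}+(t-a)^{r/2})$, hence $\leq c\,n^{-r\beta/2}$ inside $[a,b)$), and then handling intra-grid oscillations separately using that between consecutive grid points $\xi_n(\cdot,\phi)$ changes only via individual walk jumps of magnitude $O(n^{-d/4-1/2})$, whose total number in the relevant space-time window is controlled in distribution by exponential moments of the jump kernel. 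Combining the grid maximum and the intra-grid oscillation yields a per-interval bound of order $n^{-r\beta/2}$ (up to lower order), and since $r>2\beta$ the factor $|\mathcal I_n|=O(n^\beta)$ is absorbed and the sum tends to zero.

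The main obstacle I expect is exactly this promotion of the pointwise $L^{2r}$ estimate in Lemma \ref{moment_bound_A} to a bound on $E\sup_{t\in[a,b)}|\cdot|^{2r}$: the process $\xi_n(\cdot,\phi)$ is not itself a martingale in $t$, so Doob's $L^{2r}$ maximal inequality does not apply directly. The discretization-plus-oscillation strategy above sidesteps this issue at the cost of tracking two contributions; an alternative would be to compensate $\xi_n$ to form a martingale in the natural jump-time filtration of $\{X_{m,j}\}$ and invoke Doob's inequality on the compensated process, then separately bound the smooth and small deterministic drift on the short interval $[a,b)$ by Taylor expansion of $E[\phi(Y_{m,j}(t))]$ around $t=a$.
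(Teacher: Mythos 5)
Your overall architecture is close to the paper's: you use that $[n\vec{v}t]$ is frozen on each $[a,b)\in\mathcal I_n$, you split the particles at radius $n^{1/2+\alpha}$, you treat the far particles with the rapid decay of $\phi$ plus a maximal/moment estimate for a single walk (the paper does this with an intermediate radius $n^{1/2+\gamma}$ and the counting bound of Lemma \ref{lemman1} over all of $[0,T]$), and you control the near particles through the jumps they make in the short window. Where you genuinely diverge is the near part. The paper never promotes Lemma \ref{moment_bound_A} to a supremum estimate and uses no grid: the observation you are circling around but not exploiting is that, with the drift frozen, a particle that does not jump during $[na,nb)$ contributes \emph{exactly zero} to $\xi_n(t,\phi)-\xi_n(a,\phi)$ for every $t\in[a,b)$, so pathwise $\sup_{t\in[a,b)}|\xi_n(t,\phi)-\xi_n(a,\phi)|\le 2\|\phi\|_\infty\, n^{-d/4}\times(\text{number of near particles that jump in }[a,b))$. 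This makes your worry about Doob's inequality moot and renders the grid-maximum step unnecessary: the ``oscillation'' bound applied to the whole interval already is the estimate. Since $|b-a|\le n^{-\beta}$ and $\beta\ge d/4+d\alpha+1$ by \eqref{beta}, the expected number of jumping near particles is $O(n^{d/2+d\alpha+1-\beta})$, at most of order $n^{d/4}$, and an exponential Chernoff bound then gives a per-interval probability small enough to beat the $O(n^\beta)$ intervals. Incidentally, your grid-point moment claim has a slip: for $t-a<n^{-\beta}$ with $\beta>1$, Lemma \ref{A-bound} yields $E|\bar\xi_n(t,\phi)-\bar\xi_n(a,\phi)|^{2r}\le c\,n^{-r/2}$ (the $n^{-r/2}$ term dominates $(t-a)^{r/2}$), not $c\,n^{-r\beta/2}$; this happens to be harmless because $r>2\beta$ still gives $n^{\beta}\cdot n^{-r/2}\to 0$.

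The one point where your plan, as written, would not go through is the source of concentration for the jump count. Exponential moments of the jump kernel (Assumption \ref{rw-moment}) control jump \emph{sizes}, not the number of jumping particles; the count is a random sum over sites of $\eta_0(m)$ Bernoulli/Poisson contributions, and a first-moment Markov bound is hopeless here (it gives an $O(1)$-type bound per interval, which cannot survive multiplication by $|\mathcal I_n|=O(n^\beta)$). What is actually needed is the exponential Chernoff bound built from the exponential moments of $\eta_0$ in Assumption \ref{configuration} together with the Poisson clocks, exactly as in the paper's computation of $\Lambda_n^{[a,b)}$; this is also precisely where the standing choice $\beta\ge d/4+d\alpha+1$ enters, whereas the condition $r>2\beta$ you invoke plays no role in the near-particle estimate. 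Relatedly, if you insist on the per-jump displacement bound $O(n^{-d/4-1/2})$ you must separately handle large jumps, since the kernel is unbounded; the paper sidesteps this by using $\|\phi\|_\infty$ rather than $\|\nabla\phi\|_\infty$, so each jumping particle contributes at most $2\|\phi\|_\infty n^{-d/4}$ regardless of how far it jumps. With these corrections your scheme works, but it reduces to the paper's argument with an extra, unnecessary layer of discretization.
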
 
The reason that Lemma \ref{tight-2} is sufficient to prove \eqref{modulus} is the following. 
 For any $0\leq s\leq t\leq T$ with ${t-s}<n^{-\beta}$,  there exist $[a_1,b_1), [a_2,b_2)\in\mathcal I_n$ such that $s\in [a_1,b_1)$ and $t\in [a_2,b_2)$ ($[a_1,b_1)$ may be equal to $[a_2,b_2)$). 
 By the triangle inequality we get
\begin{equation}
\begin{split}
|\xi_n(t,\phi)-\xi_n(s,\phi)|&\leq |\xi_n(t,\phi)-\xi_n(a_2,\phi)|+|\xi_n(a_2,\phi)-\xi_n(a_2+n^{-\beta},\phi)|\\
&+|\xi_n(a_2+n^{-\beta},\phi)-\xi_n(a_1,\phi)|+|\xi_n(a_1,\phi)-\xi_n(s,\phi)|
\end{split}
\end{equation}  
 By applying Lemma  \ref{tight-criterion-1} and $|\mathcal I_n|\leq d[Tn^\beta+1]$, we get
 \begin{align*}
 P(w_{\xi_n}(n^{-\beta})>\epsilon)
 &\leq 2\sum_{[a,b)\in\mathcal I_n}P(\sup_{t\in[a,b)}|\xi_n(t,\phi)-\xi_n(a,\phi)|>\epsilon/4)+cn^{\beta(1-\sigma)}
 \end{align*}
 As $n\to\infty$, the second term in the above inequality goes to $0$ as $\sigma>1$.

\begin{proof}[Proof of Lemma \ref{tight-2}] 
Let \begin{equation}\label{dag}\l(\dagger\r):= \sum_{[a,b)\in\mathcal I_n}P\l(\sup_{t\in[a,b)}\l|\xi_n(t,\phi)-\xi_n(a,\phi)\r|>\epsilon\r).\end{equation}
We want to show that \[\lim_{n\to\infty} \l(\dagger\r) =0.\]
We start by giving a general idea of the proof. The current process is written as the sum of two parts: contributions from particles starting inside the box $B_{n^{1/2+\alpha}}$ (recall $\alpha>0$ from beginning of this section) and contributions from  particles starting outside the box  $B_{n^{1/2+\alpha}}$. In the first case we bound the current by the number of jumps executed by particles initially within the box $B_{n^{1/2+\alpha}}$. In the second case we conclude that particles starting outside the box $B_{n^{1/2+\alpha}}$ can either travel a distance of order greater than $\sqrt{n}$ towards the origin, which would be a large deviation, or they would remain sufficiently far from the origin, in which case the value of $\phi$ at that distance from the origin would be small.\\

Choose $\gamma$ such that  \begin{equation}\label{gamma}
0<\gamma<\alpha.\end{equation}
We first find a bound on the expected number of particles  starting outside the box $B_{n^{1/2+\alpha}}$, which enter the box $B_{n^{1/2+\gamma}}$ at some point in time interval $[0,nT]$. 
\begin{lemma}
\label{lemman1}
 Define
\begin{equation}\label{n1}\begin{split}
N_1:=&\sum_{|m|> n^{1/2+\alpha}}\sum_{j=1}^{\eta_0(m)}{\bf 1} \{X_{m,j}(nt)\in B_{n^{1/2+\gamma}}+[n\vec{v}t]\\
&\text{for some }0\leq t\leq T\}.
\end{split}
\end{equation} Let $l\geq 1$. Then there exists a constant $c$ independent of $n$ such that  
$EN_1\leq cn^{d/2-l\alpha+d\gamma}$. 
\end{lemma}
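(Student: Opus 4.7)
The plan is to bound $EN_1$ by reducing to a sum of single-particle probabilities, then applying Doob's maximal inequality together with moment bounds supplied by Assumption \ref{rw-moment}. Since $\{\eta_0(m)\}_{m\in\Zd}$ is independent of the random walks and $E\eta_0(m) = \rho_0$, Fubini immediately gives
\[
EN_1 \;=\; \rho_0 \sum_{|m| > n^{1/2+\alpha}} P\Bigl(X(nt) \in B_{n^{1/2+\gamma}} + [n\v t] - m \text{ for some } t \in [0,T]\Bigr),
\]
where $X(\cdot)$ is a generic random walk starting at the origin. The task is then to bound each summand uniformly in $m$.

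Setting $Y(t) := X(nt) - [n\v t]$, the event inside the probability is $\{Y(t) \in B_{n^{1/2+\gamma}} - m \text{ for some } t \in [0,T]\}$. Since $\gamma < \alpha$ and $|m| > n^{1/2+\alpha}$, for $n$ large this event forces $\sup_{t \in [0,T]} |Y(t)| \geq |m| - n^{1/2+\gamma} \geq |m|/2$. The next step is to apply Doob's maximal inequality coordinate-wise to the martingale $X(t) - t\v$, combined with the moment estimates $E|X(nT) - nT\v|^{p} \leq C_p\, n^{p/2}$ of arbitrary order $p$, which follow from the exponential-moment Assumption \ref{rw-moment} via standard Chernoff/Bennett machinery. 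This yields $P\bigl(\sup_{t \in [0,T]}|Y(t)| \geq R\bigr) \leq C_p\, n^{p/2} R^{-p}$. To recover the precise form of the claimed exponent, I would insert an auxiliary union bound over end-sites, writing the event as $\bigcup_{i \in B_{n^{1/2+\gamma}}}\{\exists t:\, X(nt) = i + [n\v t] - m\}$ and bounding each piece by $P(\sup_t |Y(t)| \geq |m-i|)$ with $|m-i|\geq |m|/2$; the volume factor $|B_{n^{1/2+\gamma}}| \asymp n^{d/2+d\gamma}$ then enters multiplicatively and accounts for the $n^{d\gamma}$ term.

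The final step is to sum over $m$ using the lattice comparison $\sum_{|m|>R_0}|m|^{-p} \leq c R_0^{d-p}$, valid for $p > d$, with $R_0 = n^{1/2+\alpha}$; the gap between the initial displacement $|m| \geq n^{1/2+\alpha}$ and the diffusive scale $\sqrt{n}$ then supplies the decay factor $n^{-l\alpha}$, and assembling the three contributions gives the advertised $cn^{d/2 - l\alpha + d\gamma}$. The main obstacle is simply the bookkeeping: one must coordinate the Markov power with the union-bound volume so the three factors recombine exactly into the claimed exponent, and $p$ must be chosen large enough for the lattice sum to converge, which is why the bound, though stated for all $l\geq 1$, is only informative once $l$ is large. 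Assumption \ref{rw-moment} is exactly what enables moment bounds of arbitrarily high order on $X(nT)-nT\v$ and hence polynomial tails of $\sup_t |Y(t)|$ of any desired degree, making the iteration in $l$ possible.
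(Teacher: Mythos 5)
Your proposal is correct in substance and rests on the same probabilistic core as the paper's proof: Wald/Fubini to reduce $EN_1$ to $\rho_0$ times a sum of single-walk probabilities, the observation that entering $B_{n^{1/2+\gamma}}+[n\vec{v}t]$ from a start $|m|>n^{1/2+\alpha}$ forces $\sup_{0\le t\le T}|X(nt)-[n\vec{v}t]|\ge \tfrac12 n^{1/2+\alpha}$ (indeed $\ge |m|/2$), and Doob's maximal inequality combined with moment bounds of arbitrarily high order, available because of Assumption \ref{rw-moment}. The bookkeeping, however, is organized differently. The paper trades the sum over starting sites $m$ for a sum over the $\asymp n^{d/2+d\gamma}$ lattice sites of the small box $B_{n^{1/2+\gamma}}$ and bounds every term by the single uniform probability $P\bigl(\sup_{0\le t\le T}|\bar X(nt)|>\tfrac12 n^{1/2+\alpha}\bigr)\le c\,n^{-l\alpha}$, so the $l$-th moment yields the exponent $d/2+d\gamma-l\alpha$ at once. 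You instead keep the sum over $m$, use the per-$m$ tail $c\,n^{p/2}|m|^{-p}$ together with $\sum_{|m|>n^{1/2+\alpha}}|m|^{-p}\le c\,n^{(1/2+\alpha)(d-p)}$, and additionally union-bound over target sites. Two remarks. First, the union bound over end-sites is superfluous in your scheme: the hitting event already forces $\sup_t|Y(t)|\ge|m|/2$, so the volume factor $n^{d/2+d\gamma}$ you introduce is pure loss; in the paper that factor arises from counting target sites \emph{instead of} starting sites, not on top of the $m$-sum. Second, as you assemble them your three factors give $c\,n^{d+d\alpha+d\gamma-p\alpha}$ rather than $c\,n^{d/2+d\gamma-l\alpha}$ with $p=l$; you flag this coordination issue yourself, and it is harmless: taking $p=l+d+d/(2\alpha)$ recovers the stated exponent exactly, and since the constant is allowed to depend on $l$ (and the lemma is invoked only with $l$ large), any bound of the form $n^{C-p\alpha}$ with $p$ at your disposal suffices. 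With that choice of $p$ made explicit your argument is complete, and its per-$m$ summation is arguably more robust than the paper's interchange of the $m$- and $i$-sums, which is stated as an equality but is really a counting estimate.
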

\begin{proof}
Recall that $X(nt)$ denotes a random walk starting at the origin, with the same distribution as the evolution of the particles. Let $\bar{X}(nt):=X(nt)-[n\vec{v}t]$. 
\begin{align*}
EN_1&=\rho_0\sum_{|m|>n^{1/2+\alpha}} P(\bar{X}(nt)\in B_{n^{1/2+\gamma}}-m\text{ for some }t\in[0,T])\intertext{since $\eta_0$ is independent of the the random walks}
&=\rho_0\sum_{i\in B_{n^{1/2+\gamma}}}P(\bar{X}(nt)\notin B_{n^{1/2+\alpha}}+i \text{ for some }t\in[0,T])\\
&\leq \rho_0\sum_{i\in B_{n^{1/2+\gamma}}}P(\bar{X}(nt)\notin B_{\frac{1}{2}n^{1/2+\alpha}} \text{ for some }t\in[0,T])\intertext{for large enough $n$,  $\frac{1}{2}n^{1/2+\alpha}\leq n^{1/2+\alpha}-n^{1/2+\gamma}$, as $\gamma<\alpha$}
&=\rho_0\sum_{i\in B_{n^{1/2+\gamma}}}P(\sup_{0\leq t\leq T}|\bar{X}(nt)|> \frac{1}{2}n^{1/2+\alpha})\\
&\leq cn^{d/2}(n^\gamma)^d\frac{E|\bar{X}(nT)|^l}{(\frac{1}{2}n^{1/2+\alpha})^l}\intertext{by Doob's inequality for any $l\geq 1$}
&\leq cn^{d/2}(n^\gamma)^dn^{l/2}n^{-l\alpha-l/2}\intertext{as $E|\bar{X}(nT))|^{l}$ is O($n^{l/2}$)}
&\leq cn^{d/2-l\alpha+d\gamma}.
\end{align*}
\end{proof}

Let $[a,b)\in \mathcal I_n$. Define 
\[h_{m,j}^{[a,b)}(t):= \phi\l(\Xt\r)-\phi\l(\tfrac{X_{m,j}(na)-[n\v a]}{\sqrt{n}}\r).\]
We can rewrite $(\dagger)$ in \eqref{dag} as
\begin{subequations}
\begin{align}
\l(\dagger\r)&=\sum_{[a,b)\in\mathcal I_n}P\l(\sup_{t\in[a,b)}\l|\sma\seta h_{m,j}^{[a,b)}(t)\r|>\frac{n^{d/4}\epsilon}{2}\r)\label{truncated}\\
&\qquad+\sum_{[a,b)\in\mathcal I_n}P\l(\sup_{t\in[a,b)}\l|\sum_{m\in B^c_{n^{1/2+\alpha}}}\seta h_{m,j}^{[a,b)}(t)\r|>\frac{n^{d/4}\epsilon}{2}\r)\label{N1bound}
\end{align}
\end{subequations}
The two sums reflect the split in contributions to the current process from particles starting within $B_{n^{1/2+\alpha}}$ in \eqref{truncated}, versus particles starting outside $B_{n^{1/2+\alpha}}$ in \eqref{N1bound}.  

We first show that the second term \eqref{N1bound} goes to $0$ as $n\to\infty$.  To do this we split \eqref{N1bound} into two sums, the first containing contributions from particles that enter $B_{n^{1/2+\gamma}}$ at some time in $[0,nT]$ and the second containing contributions from particles that never enter $B_{n^{1/2+\gamma}}$.
\begin{subequations}
\begin{align}
&\sum_{[a,b)\in\mathcal I_n}P\l(\sup_{t\in[a,b)}\l|\sum_{m\in B^c_{n^{1/2+\alpha}}}\seta h_{m,j}^{[a,b)}(t)\r|>\frac{n^{d/4}\epsilon}{2}\r)\nonumber\\
&\leq  \sum_{[a,b)\in\mathcal I_n} P\biggl(\sum_{m\in B^c_{n^{1/2+\alpha}}}\seta c{\bf 1}\{X_{m,j}(nt)\in B_{n^{1/2+\gamma}}+[n\vec{v}t]\nonumber\\
&\qquad\qquad\quad\qquad\qquad\qquad\qquad\qquad \text{ for some }t\in[0,T] \}>\frac{n^{d/4}\epsilon}{4}\biggr)\label{inB}\\
&+\sum_{[a,b)\in\mathcal I_n}P\biggl(\sup_{t\in[a,b)}\sum_{m\in B^c_{n^{1/2+\alpha}}}\seta\l| h_{m,j}^{[a,b)}(t)\r|\nonumber\\
&\qquad\qquad\qquad\times{\bf 1}\l\{X_{m,j}(nt)\in B^c_{n^{1/2+\gamma}}+[n\vec{v}t], \  \forall t\in [a,b)\r\}>\frac{n^{d/4}\epsilon}{4}\biggr)\label{outB}
\end{align}
\end{subequations}
To get \eqref{inB}, we  bounded $h_{m,j}^{[a,b)}$ by some constant $c$ times the indicator function, since $\phi$ is a bounded function.

Now using Lemma \ref{lemman1}, we get
\begin{align*}
\eqref{inB}&\leq\sum_{[a,b)\in\mathcal I_n} P\l(cN_1\geq \frac{n^{d/4}\epsilon}{2}\r)\\
&\leq cn^\beta n^{-d/4}EN_1\intertext{by Markov inequality and since $|\mathcal I_n|\leq d[Tn^\beta+1]$}
&\leq c n^\beta n^{-d/4}n^{d/2-l\alpha+d\gamma}
\end{align*}
by Lemma \ref{lemman1}. Choose `$l$' large enough so that $\beta+d/4-l\alpha+d\gamma<0$. Then the right hand side $\to 0$ as $n\to\infty$.

We use the property that Schwartz functions  are rapidly decreasing to show \eqref{outB} goes to $0$.
\begin{align*}
\eqref{outB}&\leq \sum_{[a,b)\in\mathcal I_n}P\Bl(\sum_{m\in B^c_{n^{1/2+\alpha}}}\seta\\
&\sum_{L\geq n^\gamma}c_N(1+L)^{-N}{\bf 1}\l\{L\leq \inf_{t\in[a,b)}\l|\Xt\r|\leq (L+1)\r\}>\frac{n^{d/4}\epsilon}{4}\Br)\intertext{ since $|\phi(x)|\leq c_N(1+|x|)^{-N}$}
&\leq \frac{n^{-d/4}4}{\epsilon}\rho_0\sum_{[a,b)\in\mathcal I_n}\sum_{m\in B^c_{n^{1/2+\alpha}}}\sum_{L\geq n^\gamma}c_N(1+L)^{-N}\\
&\qquad\qquad\qquad\qquad\times P\l(\Xtj\in B_L^c\cap B_{(L+1)} \text{ for some }t\in[a,b)\r)\intertext{by Markov inequality and since $\eta_0$ is independent of the random walks}
&= \frac{n^{-d/4}4}{\epsilon}\rho_0\sum_{[a,b)\in\mathcal I_n}\sum_{m\in B^c_{n^{1/2+\alpha}}}\sum_{L\geq n^\gamma}c_N(1+L)^{-N}\\
&\qquad\qquad\times P\l(X(nt)-[n\v t]\in B_{L\sqrt{n}}^c\cap B_{(L+1)\sqrt{n}} -m \text{ for some }t\in[a,b)\r)\\
&\leq \frac{n^{-d/4}}{\epsilon}c_N\sum_{[a,b)\in\mathcal I_n}\sum_{L\geq n^\gamma}(1+L)^{-N}(L\sqrt{n})^{d-1}\sqrt{n}\intertext{by summing over $m$ and allowing $c_N$ to absorb all the other constants}
&\leq c_N \frac{n^{d/4+\beta}}{\epsilon}\sum_{L\geq n^\gamma}L^{-N+d-1}\intertext{since $|\mathcal I_n|\leq d[Tn^\beta+1]$}
&\leq c_Nn^{d/4+\beta-\gamma(N-d)}\to 0
\end{align*}
by choosing $N$ large enough so that $d/4+\beta-\gamma(N-d)<0$.

We finally turn to \eqref{truncated}. To prove \eqref{truncated} goes to $0$, we show that the number of particles  initially inside $B_{n^{1/2+\alpha}}$  that jump during a time interval of length $n^{1-\beta}$, is stochastically smaller than $O(n^{d/4})$.  

Recall, by definition of $\mathcal I_n$,  $[n\vec{v}t]$ is constant for $t\in [a,b)\in \mathcal I_n$.
Therefore for $t\in [a,b)$, \[|h_{m,j}^{[a,b)}(t)|\leq C {\bf 1}\{G_{m,j}^{[a,b)}\},\]
where $G_{m,j}^{[a,b)}:=\{X_{m,j}(nt)$ jumps during time interval $t\in [a,b)$\} and $C=2\sup_{x\in\mathbb R^d}\phi(x)$.
 Define \[\Lambda_n^{[a,b)}:=\log E\l[\exp\l\{\smG\r\}\r]\] and 
\[\tilde{\Lambda}^{[a,b)}_n:=\log E\l[\exp\l\{{\bf 1}\l\{G_{m,1}^{[a,b)}\r\}\r\}\r].\]
Let $\Pi(n^{1-\beta})$ be a  Poisson$(n^{1-\beta})$ random variable.
\begin{equation}\label{MGF-intermediate}
\begin{split}
\tilde{\Lambda}^{[a,b)}_n&=\log\l\{1+(e-1)P\l(X_{m,1}(nt)\text{ jumps during time interval }t\in [a,b)\r)\r\}\\
&\leq \log\l\{1+(e-1)P\l(\Pi(n^{1-\beta})\geq 1\r)\r\}\intertext{since $b-a\leq n^{-\beta}$}
&\leq (e-1)P\l(\Pi(n^{1-\beta})\geq 1\r)\\
&=c n^{1-\beta}
\end{split}
\end{equation}
by Markov inequality.
\begin{equation}\label{MGF}
\begin{split}
\Lambda_n^{[a,b)}&=\sma\log E\exp\l\{\eta_0(m)\tilde{\Lambda}_n^{[a,b)}\r\}\intertext{by independence of $\eta_0$ from the random walks}
&\leq \sma\log\l[1+\rho_0cn^{1-\beta}(e-1)+o(n^{1-\beta})\r]\intertext{by \eqref{MGF-intermediate} and  since $\eta_0(m)$ has exponential moments}
&\leq \sma cn^{1-\beta} 
\leq c n^{d/2+d\alpha+1-\beta}
\end{split}
\end{equation}
Putting all this together, we get
\begin{align*}
\eqref{truncated}&\leq \sum_{[a,b)\in\mathcal I_n}P\l(\sma\seta {\bf 1}\{G_{m,j}^{[a,b)}\}>\frac{n^{d/4}\epsilon}{2C}\r)
\intertext{ since $|h_{m,j}^{[a,b)}(t)|\leq C {\bf 1}\{G_{m,j}^{[a,b)}\}$ for $t\in [a,b)$}
&\leq  \sum_{[a,b)\in\mathcal I_n}\exp\l\{\frac{-n^{d/4}\epsilon}{2C}\r\}E\l[\exp\l\{\sma\seta {\bf 1}\{G_{m,j}^{[a,b)}\}\r\}\r]\intertext{by Markov inequality}
&\leq \sum_{[a,b)\in\mathcal I_n}\exp\l\{\frac{-n^{d/4}\epsilon}{2C}\r\}\exp\l\{\Lambda_n^{[a,b)}\r\}\\
&\leq d[Tn^\beta+1]\exp\l\{-cn^{d/4} \l(1-n^{d/4+d\alpha+1-\beta}\r)\r\}\intertext{ by applying \eqref{MGF} and $|\mathcal I_n|\leq d[Tn^\beta+1]$}
&\to 0 
\end{align*}
as $n\to\infty$, since  $\beta\geq d/4+d\alpha+1$  by  definition \eqref{beta}.\\
This proves Lemma  \ref{tight-2} and thus verifies the second tightness condition \eqref{modulus}.
\end{proof}

\begin{proof}[Proof of Proposition \ref{process-tight}]
Lemma \ref{tight-criterion-1} and \eqref{modulus} satisfy the tightness  criteria in Proposition 5.7 of \cite{Durr},  thus we get $\xi_n(\cdot,\phi)$ is tight in $D([0,T],\mathbb R)$.
\end{proof}

\begin{proof}[Proof of Theorem \ref{main_thm}]
We invoke  Theorem 4.1 in \cite{Mitoma} 
which states that Proposition \ref{process-tight} is sufficient to prove that the sequence $\{\xi_n(\cdot,\cdot)\}$ is tight in $D([0,T],\Sdual)$. This, together with Lemma \ref{finite-dim} proves the theorem. 
\end{proof}

\begin{proof}[Proof of Theorem \ref{thm2}]
The proof of this theorem follows almost verbatim from that of Theorem \ref{main_thm}. A few places where the proof differs from that of Theorem \ref{main_thm} are highlighted below.

When proving tightness, the proof of Lemma \ref{moment_bound_A} is different.
From equation \eqref{A} onwards in Lemma \ref{moment_bound_A}, the proof differs as follows.
\begin{equation*}\begin{split}
\sm E\left|\bar{A}_m\right|^k&\leq c2^{2r}\l[\sm E\left|\pstj\right|^k\r]\\
&\leq c2^{2r}\sm\l[P(C_m)+P(D_m)\r]
\end{split}\end{equation*}
where $C_m=\{X_{m,1}(nt)\in B_{M\sqrt{n}}+[n\vec{v}t], X_{m,1}(ns)\notin B_{M\sqrt{n}}+[n\vec{v}s]\}$\\
and $D_m=\{X_{m,1}(nt)\notin B_{M\sqrt{n}}+[n\vec{v}t],X_{m,1}(ns)\in B_{M\sqrt{n}}+[n\vec{v}s]\}$.

\begin{align*}
&\sum_{m\in\mathbb Z^d}P(C_m)=P(X(nt)\in B_{M\sqrt{n}}+[n\vec{v}t]-m, X(ns)\notin B_{M\sqrt{n}}+[n\vec{v}s]-m)\\
&=\sum_{m\in\mathbb Z^d}\sum_{j\notin B_{M\sqrt{n}}}P(X(nt)-X(ns)\in B_{M\sqrt{n}}+[n\vec{v}t]-[n\vec{v}s]-j\ \vert\\
&\qquad X(ns)=j+[n\vec{v}s]-m) \times P(X(ns)=j+[n\vec{v}s]-m)\\
&=\sum_{j\notin B_{M\sqrt{n}}}P(X(n(t-s))\in B_{M\sqrt{n}}+[n\vec{v}t]-[n\vec{v}s]-j)\\
&=\sum_{j\notin B_{M\sqrt{n}}}P(\bar{X}_n(t,s)\in B_{M\sqrt{n}}+j)\\
\intertext{where $\bar{X}_n(t,s)=X(n(t-s))-[n\vec{v}t]+[n\vec{v}s]$}
&=\sum_{k\in\mathbb Z^d}P(\bar{X}_n(t,s)=k)\sum_{j\notin B_{M{\sqrt{n}}}}{\bf 1}\{k\in B_{M\sqrt{n}}+j\}\\
&\leq \sum_{l=0}^{M\sqrt{n}}\sum_{k\in \mathbb Z^d:|k|=l}P(\bar{X}_n(t,s)=k)n^{\frac{d-1}{2}}|k|+\sum_{|k|>M\sqrt{n}}P(\bar{X}_n(t,s)=k)cn^{d/2}\\
&\leq n^{\frac{d-1}{2}}E|\bar{X}_n(t,s)|+cn^{d/2}P(\bar{X}_n(t,s)>M\sqrt{n})\\
&\leq c\bigl[ n^{\frac{d-1}{2}}(\sqrt{n(t-s)}+1)+n^{d/2}\frac{E|\bar{X}_n(t,s)|^2}{M^2n} \bigr]\intertext{since $E\l|X(n(t-s))-n(t-s)\v \r|$ is $O\l(\sqrt{n(t-s)}\ \r)$}
&\leq c\bigl[ n^{\frac{d-1}{2}}(\sqrt{n(t-s)}+1)+n^{d/2}\frac{n(t-s)+1}{M^2n} \bigr]
\leq cn^{d/2}[\sqrt{t-s}+n^{-1/2}].
\end{align*}

Similarly,
\begin{align*}
\sum_{m\in\mathbb Z^d}P(D_m)\leq cn^{d/2}[\sqrt{t-s}+n^{-1/2}].
\end{align*}

The proof of \eqref{modulus}  holds for indicator functions without any modifications. 
\end{proof}

{\bf Acknowledgments}  This  paper is part of my Ph.D. thesis at the University of Wisconsin-Madison. I would like to thank my advisor, Prof.\ Timo Sepp\"al\"ainen, for his guidance. I would also like to thank Prof.\ Tom Kurtz for pointing out the connection to stochastic partial differential equations and for several helpful discussions.

\bibliographystyle{plain}

\begin{thebibliography}{9}

\bibitem{BG86}
Bojdecki, Tomasz  and  Gorostiza, Luis G.
\emph{Langevin equations for $\Sdual$-Valued Gaussian processes and fluctuation limits of infinite particle systems}, Probability Theory and Related Fields,  Springer Berlin / Heidelberg, 	Volume 73, Number 2 , September, 1986


\bibitem{Durr}
    D{\"u}rr, Detlef and Goldstein, Sheldon and Lebowitz, Joel L.,
    \emph {Asymptotics of particle trajectories in infinite
              one-dimensional systems with collisions},
   Comm. Pure Appl. Math.,
 \textbf{38},
   (1985),
   573--597

\bibitem{Ethier-Kurtz}
	Ethier, Stewart N. and Kurtz, Thomas G., \emph{Markov processes, Characterization and convergence}, John Wiley \& Sons Inc., New York. 1986
	
	
\bibitem{HS78}
Holley, R. and Stoock, D., 
\emph{Generalized Ornstein-Uhlenbeck processes and infinite particle branching Brownian motions}, Publ. RIMS Kyoto Univ. 14 (1978), 741Ð788.
	
	\bibitem{Kallianpur}
   Kallianpur, Gopinath and Xiong, Jie, \emph{Stochastic differential equations in infinite-dimensional
              spaces},
      Institute of Mathematical Statistics,
   Hayward, CA,
     1995
     
     
     
  \bibitem{KL}
   Kipnis, Claude and Landim, Claudio,
   \emph{Scaling limits of interacting particle systems},
    Grundlehren der Mathematischen Wissenschaften [Fundamental
              Principles of Mathematical Sciences], volume 320,
 Springer-Verlag, Berlin, 1999
   

     \bibitem{kumar} Kumar, Rohini,
\emph{Space-time current process for independent random walks in one
              dimension},
   ALEA Lat. Am. J. Probab. Math. Stat.,
  \textbf {4},
     (2008),
   307--336	


	
\bibitem{Martin-Lof}
Martin-L\"of, Anders,
\emph { Limit theorems for the motion of a Poisson system of independent Markovian particles with high density}, Probability Theory and Related Fields, \textbf{34}, (1976), 205--223
   
   
   \bibitem{Mitoma}
   Mitoma, Itaru,
    \emph{Tightness of probabilities on {$C([0,1];{\cal S}\sp{\prime}
              )$} and {$D([0,1];{\cal S}\sp{\prime} )$}},
   Ann. Probab., \textbf{11},
      (1983), 989--999
	

\bibitem {Timo-IRW}
  Sepp{\"a}l{\"a}inen, Timo,
    \emph{Second-order fluctuations and current across characteristic
              for a one-dimensional growth model of independent random
              walks},
   Ann. Probab.,
 \textbf {33},
     (2005),
   759--797
   
   \bibitem{Sethu-ASEP}
    Sethuraman, Sunder,
     \emph {Diffusive variance for a tagged particle in {$d\leq2$}
              asymmetric simple exclusion},
   ALEA Lat. Am. J. Probab. Math. Stat.,
  \textbf {1},
      (2006),
     305--332 (electronic)	


\bibitem{Sethu-1}
    Sethuraman, Sunder,
     \emph{Superdiffusivity of occupation-time variance in 2-dimensional
              asymmetric exclusion processes with density {$\rho=1/2$}},
  J. Stat. Phys.,
 \textbf{123},
      (2006),
    787--802
    
      \bibitem{Sethu-ZRP}
    Sethuraman, Sunder,
     \emph{On diffusivity of a tagged particle in asymmetric zero-range
              dynamics},
Ann. Inst. H. Poincar\'e Probab. Statist.,
 \textbf{43},
     (2007),
   215--232		

\bibitem {Sethu_Vara_Yau}
    Sethuraman, Sunder and Varadhan, S. R. S. and Yau, Horng-Tzer,
     \emph{Diffusive limit of a tagged particle in asymmetric simple
              exclusion processes},
  Comm. Pure Appl. Math.,
  \textbf{53},
     (2000),
   972--1006


\bibitem {Yau}
   Yau, Horng-Tzer,
     \emph{{$(\log t)\sp {2/3}$} law of the two dimensional asymmetric
              simple exclusion process},
   Ann. of Math.,
  \textbf {159},
      (2004),
  377--405
  
\end{thebibliography}

\end{document}